\newcolumntype{L}[1]{>{\raggedright\arraybackslash}p{#1}}
\newcolumntype{C}[1]{>{\centering\arraybackslash}p{#1}}
\newcolumntype{R}[1]{>{\raggedleft\arraybackslash}p{#1}}
\newcolumntype{J}[1]{>{\justifying\arraybackslash}p{#1}}
\pgfplotsset{/pgf/number format/use comma} 
\newtheorem{thm}{Theorem}[section]
\newtheorem{lem}[thm]{Lemma}
\theoremstyle{definition}
\newtheorem{defi}[thm]{Definition}
\theoremstyle{remark}
\newtheorem*{expl}{Example}
\newtheorem*{rem}{Remark}
\numberwithin{equation}{section}
\title{ANOVA approximation with mixed tensor product basis on scattered points}
\author{
  Daniel Potts\footnotemark[1]\and
  Pascal Schröter\footnotemark[3]
}
\begin{document}
\newcommand{\abf}{\mathbf{a}}
\newcommand{\bbf}{\mathbf{b}}
\newcommand{\cbf}{\mathbf{c}}
\newcommand{\dbf}{\mathbf{d}}
\newcommand{\ebf}{\mathbf{e}}
\newcommand{\fbf}{\mathbf{f}}
\newcommand{\gbf}{\mathbf{g}}
\newcommand{\hbf}{\mathbf{h}}
\newcommand{\ibf}{\mathbf{i}}
\newcommand{\jbf}{\mathbf{j}}
\newcommand{\kbf}{\mathbf{k}}
\newcommand{\lbf}{\mathbf{l}}
\newcommand{\mbf}{\mathbf{m}}
\newcommand{\nbf}{\mathbf{n}}
\newcommand{\obf}{\mathbf{o}}
\newcommand{\pbf}{\mathbf{p}}
\newcommand{\qbf}{\mathbf{q}}
\newcommand{\rbf}{\mathbf{r}}
\newcommand{\sbf}{\mathbf{s}}
\newcommand{\tbf}{\mathbf{t}}
\newcommand{\ubf}{\mathbf{u}}
\newcommand{\vbf}{\mathbf{v}}
\newcommand{\wbf}{\mathbf{w}}
\newcommand{\xbf}{\mathbf{x}}
\newcommand{\ybf}{\mathbf{y}}
\newcommand{\zbf}{\mathbf{z}}
\newcommand{\Abf}{\mathbf{A}}
\newcommand{\Bbf}{\mathbf{B}}
\newcommand{\Cbf}{\mathbf{C}}
\newcommand{\Dbf}{\mathbf{D}}
\newcommand{\Ebf}{\mathbf{E}}
\newcommand{\Fbf}{\mathbf{F}}
\newcommand{\Gbf}{\mathbf{G}}
\newcommand{\Hbf}{\mathbf{H}}
\newcommand{\Ibf}{\mathbf{I}}
\newcommand{\Jbf}{\mathbf{j}}
\newcommand{\Kbf}{\mathbf{K}}
\newcommand{\Lbf}{\mathbf{L}}
\newcommand{\Mbf}{\mathbf{M}}
\newcommand{\Nbf}{\mathbf{N}}
\newcommand{\Obf}{\mathbf{O}}
\newcommand{\Pbf}{\mathbf{P}}
\newcommand{\Qbf}{\mathbf{Q}}
\newcommand{\Rbf}{\mathbf{R}}
\newcommand{\Sbf}{\mathbf{S}}
\newcommand{\Tbf}{\mathbf{T}}
\newcommand{\Ubf}{\mathbf{U}}
\newcommand{\Vbf}{\mathbf{V}}
\newcommand{\Wbf}{\mathbf{W}}
\newcommand{\Xbf}{\mathbf{X}}
\newcommand{\Ybf}{\mathbf{Y}}
\newcommand{\Zbf}{\mathbf{Z}}
\newcommand{\nullbf}{\mathbf{0}}
\newcommand{\einsbf}{\mathbf{1}}
\newcommand{\alphabf}{\boldsymbol{\alpha}}
\newcommand{\betabf}{\boldsymbol{\beta}}
\newcommand{\gammabf}{\boldsymbol{\gamma}}
\newcommand{\deltabf}{\boldsymbol{\delta}}
\newcommand{\epsilonbf}{\boldsymbol{\epsilon}}
\newcommand{\zetabf}{\boldsymbol{\zeta}}
\newcommand{\etabf}{\boldsymbol{\eta}}
\newcommand{\thetabf}{\boldsymbol{\theta}}
\newcommand{\iotabf}{\boldsymbol{\iota}}
\newcommand{\kappabf}{\boldsymbol{\kappa}}
\newcommand{\lambdabf}{\boldsymbol{\lambda}}
\newcommand{\mubf}{\boldsymbol{\mu}}
\newcommand{\nubf}{\boldsymbol{\nu}}
\newcommand{\xibf}{\boldsymbol{\xi}}
\newcommand{\pibf}{\boldsymbol{\pi}}
\newcommand{\rhobf}{\boldsymbol{\rho}}
\newcommand{\sigmabf}{\boldsymbol{\sigma}}
\newcommand{\taubf}{\boldsymbol{\tau}}
\newcommand{\phibf}{\boldsymbol{\phi}}
\newcommand{\chibf}{\boldsymbol{\chi}}
\newcommand{\psibf}{\boldsymbol{\psi}}
\newcommand{\omegabf}{\boldsymbol{\omega}}
\newcommand{\Gammabf}{\boldsymbol{\Gamma}}
\newcommand{\Deltabf}{\boldsymbol{\Delta}}
\newcommand{\Thetabf}{\boldsymbol{\Theta}}
\newcommand{\Lambdabf}{\boldsymbol{\Lambda}}
\newcommand{\Xibf}{\boldsymbol{\Xi}}
\newcommand{\Pibf}{\boldsymbol{\Pi}}
\newcommand{\Sigmabf}{\boldsymbol{\Sigma}}
\newcommand{\Phibf}{\boldsymbol{\Phi}}
\newcommand{\Psibf}{\boldsymbol{\Psi}}
\newcommand{\Omegabf}{\boldsymbol{\Omega}}

\newcommand{\Abb}{\mathbb{A}}
\newcommand{\BBbb}{\mathbb{B}}
\newcommand{\Cbb}{\mathbb{C}}
\newcommand{\Dbb}{\mathbb{D}}
\newcommand{\Ebb}{\mathbb{E}}
\newcommand{\Fbb}{\mathbb{F}}
\newcommand{\Gbb}{\mathbb{G}}
\newcommand{\Hbb}{\mathbb{H}}
\newcommand{\Ibb}{\mathbb{I}}
\newcommand{\Jbb}{\mathbb{J}}
\newcommand{\Kbb}{\mathbb{K}}
\newcommand{\Lbb}{\mathbb{L}}
\newcommand{\Mbb}{\mathbb{M}}
\newcommand{\Nbb}{\mathbb{N}}
\newcommand{\Obb}{\mathbb{O}}
\newcommand{\Pbb}{\mathbb{P}}
\newcommand{\Qbb}{\mathbb{Q}}
\newcommand{\Rbb}{\mathbb{R}}
\newcommand{\Sbb}{\mathbb{S}}
\newcommand{\Tbb}{\mathbb{T}}
\newcommand{\Ubb}{\mathbb{U}}
\newcommand{\Vbb}{\mathbb{V}}
\newcommand{\Wbb}{\mathbb{W}}
\newcommand{\Xbb}{\mathbb{X}}
\newcommand{\Ybb}{\mathbb{Y}}
\newcommand{\Zbb}{\mathbb{Z}}

\newcommand{\arm}{\mathrm{a}}
\newcommand{\brm}{\mathrm{b}}
\newcommand{\crm}{\mathrm{c}}
\newcommand{\drm}{\mathrm{d}}
\newcommand{\erm}{\mathrm{e}}
\newcommand{\frm}{\mathrm{f}}
\newcommand{\grm}{\mathrm{g}}
\newcommand{\hrm}{\mathrm{h}}
\newcommand{\irm}{\mathrm{i}}
\newcommand{\jrm}{\mathrm{j}}
\newcommand{\krm}{\mathrm{k}}
\newcommand{\lrm}{\mathrm{l}}
\newcommand{\mrm}{\mathrm{m}}
\newcommand{\nrm}{\mathrm{n}}
\newcommand{\orm}{\mathrm{o}}
\newcommand{\prm}{\mathrm{p}}
\newcommand{\qrm}{\mathrm{q}}
\newcommand{\rrm}{\mathrm{r}}
\newcommand{\srm}{\mathrm{s}}
\newcommand{\trm}{\mathrm{t}}
\newcommand{\urm}{\mathrm{u}}
\newcommand{\vrm}{\mathrm{v}}
\newcommand{\wrm}{\mathrm{w}}
\newcommand{\xrm}{\mathrm{x}}
\newcommand{\yrm}{\mathrm{y}}
\newcommand{\zrm}{\mathrm{z}}
\newcommand{\Arm}{\mathrm{A}}
\newcommand{\Brm}{\mathrm{B}}
\newcommand{\Crm}{\mathrm{C}}
\newcommand{\Drm}{\mathrm{D}}
\newcommand{\Erm}{\mathrm{E}}
\newcommand{\Frm}{\mathrm{F}}
\newcommand{\Grm}{\mathrm{G}}
\newcommand{\Hrm}{\mathrm{H}}
\newcommand{\Irm}{\mathrm{I}}
\newcommand{\Jrm}{\mathrm{J}}
\newcommand{\Krm}{\mathrm{K}}
\newcommand{\Lrm}{\mathrm{L}}
\newcommand{\Mrm}{\mathrm{M}}
\newcommand{\Nrm}{\mathrm{N}}
\newcommand{\Orm}{\mathrm{O}}
\newcommand{\Prm}{\mathrm{P}}
\newcommand{\Qrm}{\mathrm{Q}}
\newcommand{\Rrm}{\mathrm{R}}
\newcommand{\Srm}{\mathrm{S}}
\newcommand{\Trm}{\mathrm{T}}
\newcommand{\Urm}{\mathrm{U}}
\newcommand{\Vrm}{\mathrm{V}}
\newcommand{\Wrm}{\mathrm{W}}
\newcommand{\Xrm}{\mathrm{X}}
\newcommand{\Yrm}{\mathrm{Y}}
\newcommand{\Zrm}{\mathrm{Z}}

\newcommand{\acal}{\mathcal{a}}
\newcommand{\bcal}{\mathcal{b}}
\newcommand{\ccal}{\mathcal{c}}
\newcommand{\dcal}{\mathcal{d}}
\newcommand{\ecal}{\mathcal{e}}
\newcommand{\fcal}{\mathcal{f}}
\newcommand{\gcal}{\mathcal{g}}
\newcommand{\hcal}{\mathcal{h}}
\newcommand{\ical}{\mathcal{i}}
\newcommand{\jcal}{\mathcal{j}}
\newcommand{\kcal}{\mathcal{k}}
\newcommand{\lcal}{\mathcal{l}}
\newcommand{\mcal}{\mathcal{m}}
\newcommand{\ncal}{\mathcal{n}}
\newcommand{\ocal}{\mathcal{o}}
\newcommand{\pcal}{\mathcal{p}}
\newcommand{\qcal}{\mathcal{q}}
\newcommand{\rcal}{\mathcal{r}}
\newcommand{\scal}{\mathcal{s}}
\newcommand{\tcal}{\mathcal{t}}
\newcommand{\ucal}{\mathcal{u}}
\newcommand{\vcal}{\mathcal{v}}
\newcommand{\wcal}{\mathcal{w}}
\newcommand{\xcal}{\mathcal{x}}
\newcommand{\ycal}{\mathcal{y}}
\newcommand{\zcal}{\mathcal{z}}
\newcommand{\Acal}{\mathcal{A}}
\newcommand{\Bcal}{\mathcal{B}}
\newcommand{\Ccal}{\mathcal{C}}
\newcommand{\Dcal}{\mathcal{D}}
\newcommand{\Ecal}{\mathcal{E}}
\newcommand{\Fcal}{\mathcal{F}}
\newcommand{\Gcal}{\mathcal{G}}
\newcommand{\Hcal}{\mathcal{H}}
\newcommand{\Ical}{\mathcal{I}}
\newcommand{\Jcal}{\mathcal{J}}
\newcommand{\Kcal}{\mathcal{K}}
\newcommand{\Lcal}{\mathcal{L}}
\newcommand{\Mcal}{\mathcal{M}}
\newcommand{\Ncal}{\mathcal{N}}
\newcommand{\Ocal}{\mathcal{O}}
\newcommand{\Pcal}{\mathcal{P}}
\newcommand{\Qcal}{\mathcal{Q}}
\newcommand{\Rcal}{\mathcal{R}}
\newcommand{\Scal}{\mathcal{S}}
\newcommand{\Tcal}{\mathcal{T}}
\newcommand{\Ucal}{\mathcal{U}}
\newcommand{\Vcal}{\mathcal{V}}
\newcommand{\Wcal}{\mathcal{W}}
\newcommand{\Xcal}{\mathcal{X}}
\newcommand{\Ycal}{\mathcal{Y}}
\newcommand{\Zcal}{\mathcal{Z}}

\newcommand{\und}{\text{ und }}
\newcommand{\andt}{\text{ and }}
\newcommand{\ko}{,\;}

\newcommand{\mop}[1]{\mathop{\mathrm{#1}}}
\newcommand{\supp}{\mop{supp}\limits}
\newcommand{\sign}{\mop{sign}\limits}
\newcommand{\diag}{\mop{diag}\limits}
\newcommand{\spanm}{\mop{span}\limits}
\newcommand{\img}{\mop{img}\limits}
\newcommand{\id}{\mop{id}}
\newcommand{\sinc}{\mop{sinc}}

\newcommand{\argmin}{\mathop{\arg\min}\limits}
\newcommand{\argmax}{\mathop{\arg\max}\limits}
\newcommand{\limesinf}{\mathop{\lim\inf}\limits}
\newcommand{\limessup}{\mathop{\lim\sup}\limits}
\newcommand{\esssup}{\mathop{\mop{ess}\sup}\limits}
\newcommand{\essinf}{\mathop{\mop{ess}\inf}\limits}
\newcommand{\arctann}{\mathop{\mathrm{arctan2}}\limits}

\newcommand{\igl}[3]{\int_{#1}{#2}\;\drm{#3}}
\newcommand{\iglse}[4]{\int_{#1}^{#2}{#3}\;\drm{#4}}
\newcommand{\skp}[2]{\left\langle{#1},{#2}\right\rangle}
\newcommand{\skpr}[3]{\skp{#1}{#2}_{#3}}
\newcommand{\skpk}[2]{\langle{#1},{#2}\rangle}
\newcommand{\skprk}[3]{\skpk{#1}{#2}_{#3}}
\newcommand{\Lp}[1]{\Lrm_{#1}}
\newcommand{\betr}[1]{\left\lvert{#1}\right\rvert}
\newcommand{\betrk}[1]{\lvert{#1}\rvert}
\newcommand{\norm}[1]{\left\lVert{#1}\right\rVert}
\newcommand{\normk}[1]{\lVert{#1}\rVert}
\newcommand{\men}[2]{\left\{{#1}\;\middle\vert\;{#2}\right\}}
\newcommand{\menk}[2]{\{{#1}\mid{#2}\}}
\newcommand{\fktk}[3]{{#1}\colon{#2}\rightarrow{#3}}
\newcommand{\fkt}[5]{\fktk{#1}{#2}{#3},\;{#1}({#4})\coloneqq{#5}}
\newcommand{\fktka}[3]{{#1}&\colon{#2}\rightarrow{#3}}
\newcommand{\fkta}[5]{\fktka{#1}{#2}{#3},\;{#1}({#4})\coloneqq{#5}}
\newcommand{\fktmt}[5]{\fktk{#1}{#2}{#3},\;{#4}\mapsto{#5}}
\newcommand{\fktl}[5]{&\rlap{$\fktk{#1}{#2}{#3},$}&&\\&{#1}({#4})&\;\coloneqq{}&{#5}} 
\newcommand{\fktlnl}[1]{&&={}&{#1}}
\newcommand{\fktmtl}[5]{{#1}\colon{#2}&\rightarrow{#3},\\{#4}&\mapsto{#5}}

\newcommand{\im}{\mathrm{i}}
\newcommand{\e}{\mathrm{e}}

\allowdisplaybreaks
\newcommand{\bas}[3]{\phi_{#1}^{{#2},{#3}}}
\newcommand{\cosbas}[2]{\bas{#1}{#2}{\cos}}
\newcommand{\expbas}[2]{\bas{#1}{#2}{\exp}}
\newcommand{\xcos}{\mathbf{x}^{\cos}}
\newcommand{\xexp}{\mathbf{x}^{\exp}}
\newcommand{\kcos}{\kbf^{\cos}}
\newcommand{\kexp}{\kbf^{\exp}}
\newcommand{\lcos}{\mathbf{l}^{\cos}}
\newcommand{\lexp}{\mathbf{l}^{\exp}}
\newcommand{\alg}{\mop{alg}}
\newcommand{\expbf}{\mathbf{exp}}
\newcommand{\cosbf}{\mathbf{cos}}
\newcommand{\meng}[3]{\Ical_{#1}^{{#2},{#3}}}
\newcommand{\mentil}[3]{\tilde{\Ical}_{#1}^{{#2},{#3}}}
\newcommand{\defb}[2]{\Dbb^{{#1},{#2}}}
\newcommand{\nfft}{\mathsf{NFFT}}
\newcommand{\nfftt}{{\mathsf{NFFT}^\top}}
\newcommand{\nffct}{\mathsf{NFMT}}
\newcommand{\dfct}{\mathsf{DFCT}}
\newcommand{\nffctt}{{\mathsf{NFMT}^\top}}
\newcommand{\nfct}{\mathsf{NFCT}}
\dedication{This Paper is dedicated to the 70th birthday of Albrecht Böttcher}
\maketitle
\begin{abstract}
	In this paper we consider an orthonormal basis, generated by a tensor product of Fourier basis functions, half period cosine basis functions, and the Chebyshev basis functions. We deal with the approximation problem in high dimensions related to this basis and design a fast algorithm to multiply with the underlying matrix, consisting of rows of the non-equidistant Fourier matrix, the non-equidistant cosine matrix and the non-equidistant Chebyshev matrix, and its transposed. Using this, we derive the ANOVA (analysis of variance) decomposition for functions with partially periodic boundary conditions through using the Fourier basis in some dimensions and the half period cosine basis or the Chebyshev basis in others. We consider sensitivity analysis in this setting, in order to find an adapted basis for the underlying approximation problem. More precisely, we find the underlying index set of the multidimensional series expansion. Additionally, we test this ANOVA approximation with mixed basis at numerical experiments, and refer to the advantage of interpretable results.
	\subsubsection*{Key words}
	ANOVA, high-dimensional approximation, Fourier approximation, fast Fourier methods, NFFT, Chebyshev polynomials
	\subsubsection*{AMS subject classifications}
	65T, 42B05
\end{abstract}

\footnotetext[1]{potts@math.tu-chemnitz.de, Chemnitz University of
	Technology, Faculty of Mathematics, D--09107 Chemnitz, Germany}

\footnotetext[3]{pascal.schroeter@math.tu-chemnitz.de, Chemnitz University of
	Technology, Faculty of Mathematics, D--09107 Chemnitz, Germany}

\section{Introduction}\label{sec:1}
The approximation of functions is a problem that arises in many scientific fields. As soon as data is recorded, questions how \ldq{}Which correlations are in the data?\rdq{}, \ldq{}Which variables are dependent on one another\rdq{} and \ldq{}How data can be predicted at other points?\rdq{} arises.\\
To formalise these questions we introduce the following. Let $\Xcal$ be a set of typically high-dimensional data points which we refer as nodes. Let $f$ be a continuous function that maps each node $\xbf$ contained in $\Xcal$ to a value $f(\xbf)$. Now the task is to find a model that approximates the function $f$, which we can easily evaluate at arbitrary nodes. The second requirement for the model is that it should be possible to find out with reasonable effort which combinations of the dimensions of the nodes influence the values of the model in which way.\\
There are various algorithms as artificial neural networks or support vector machines for approximating functions in high dimensions, see e.g. \cite{Agg14,elements}. However, these only reveal the hidden dependencies in the data if they are examined in detail, which is costly. There are methods that are particularly well suited to answering the question of hidden connections in the data. We focus on the ANOVA (analysis of variance) decomposition, cf. \cite{CaMoOw97,RaAl99,LiOw06,KuSloWaWo10,Holtz11,Gu2013} and \cite[Appendix A]{Owe13}. This method gave really good results in this task as it is shown in \cite{PoSc22, schmischkediss}. We build our theory on the foundation provided by this work.\\
The classical ANOVA is based on an integral projection operator. 
We use consequently the series expansion in various basic functions to define the ANOVA decomposition, cf. \cite{PoSchmi19,PoSc19b,schmischkediss}. A majority of real world systems are dominated by low-complexity interactions of their variables. This principle is known as sparsity-of-effects, see e.g. \cite[Section 4.6]{WuHa21},\cite{HaSchShToTrWa22},\cite[Section 4.2]{schmischkediss}. We use this principle to truncate the ANOVA decomposition. 
Since we consider a finite truncation of the series expansion for approximation, the properties of the basis functions, such as the periodicity, are reflected in the approximation. It is therefore advantageous if the basis functions have similar properties as the data of the underlying process. 
We will combine well known basis functions on $[0,1]$ like the Fourier basis functions $\phi_k^{\exp}\coloneqq\exp(2\pi\im k \cdot)$, $k\in\Zbb$, the half period cosine basis functions $\phi_k^{\cos}\coloneqq\sqrt{2}^{\;1-\delta_{k,0}}\cos(\pi k \cdot)$, $k\in\Nbb_0$, and the Chebyshev basis functions $\phi_k^{\alg}\coloneqq\sqrt{2}^{\;1-\delta_{k,0}}\cos(k \arccos(2\cdot-1))$, $k\in\Nbb_0$ in a tensor product structure to achieve more flexibility what properties are present in which dimensions. We denote this tensor product basis functions by $\phi_\kbf^\dbf\coloneqq\prod_{j=1}^{d}\phi_{k_j}^{d_j}$. Here $\dbf$ is a vector containing the information which basis is used in which dimension. We assume that it is known from the application which base in which dimension should be used.
In a nutshell, we use a finite sum of these basis functions to approximate a function $f$, i.e.
\begin{align*}
	\sum_{\kbf\in\Ical}\hat{f}_\kbf\phi_\kbf\approx f.
\end{align*}
In this way we get a model of the data with which we can predict further data. In order to get good approximations, it is essential to use appropriate index sets $\Ical$ in the approximating sums. The choice of such an index set is always a trade-off between the number of indices, the number of training data available, and the needed computation time.\\
Furthermore, we use another concept of the classical ANOVA, namely the analytic global sensitivity indices \cite{So90,So01,LiOw06}. These tell us which variables are related and how big the influence of these relations are. Using the coefficients $\hat{f}_\kbf$ from our model for the function $f$, we can calculate approximated global sensitivity indices. Using these approximated global sensitivity indices, we can truncate our approximation even further to get better suitable index sets $\Ical$, which provides us even better approximations.\\
To compute such approximations it is important to evaluate finite sums of basis functions $\sum_{\kbf\in\Ical}\hat{f}_\kbf\phi_\kbf(\xbf)$ with known index sets $\Ical$ at many scattered nodes $\xbf_j\in[0,1]^d$ simultaneously. A main focus of this work is the development of algorithms to evaluate these sums of mixed basis functions $\phi_\kbf^\dbf$. If we are dealing with the Fourier basis in each dimension, the resulting sums can be evaluated through algorithms like the non-equidistant fast Fourier transform ($\nfft$) \cite{KeKuPo09},\cite[Chapter 7]{PlPoStTa18} combined with grouped transforms, cf. \cite{BaPoSchmi21,PoSc19b}. Similar things work, if we deal with the cosine basis in each dimension or the Chebyshev basis in each dimension. We will develop an fast algorithm based on the $\nfft$ to evaluate these sums at many scattered nodes.\\
The reason, why we have to evaluate these sums at scattered nodes is, that real world data is rarely equidistantly sampled. The usage of real world data provides another challenge, because it is typically not in the domain $[0,1]^d$. This problem can be solved by rescaling the data, e.g. with a min-max normalisation. If the source of the data is not bounded, we need to use a different transformation. For example, if the data is normally distributed, they can be pre-processed with the error function to bring them into the domain $[0,1]^d$, see e.g. \cite{PoSc22}.\\
The paper is organized as follows. In Section \ref{sec:2} we set up notion and terminology. Firstly, in Subsection \ref{sec:2.1} we introduce needed function spaces and some of their relations. Subsection \ref{sec:2.2} introduces some well known orthonormal basis and finally the mixed basis with which we will deal with in the rest of the paper. In Section \ref{sec:3} we introduce the ANOVA approximation for the mixed basis based on an approach by Fourier series. Here, Subsection \ref{sec:3.1} deals with the definition of the ANOVA decomposition and Subsection \ref{sec:3.2} provides a way to compute an ANOVA approximation. We split this subsection in three parts, firstly we consider useful index sets then we describe a way to compute the ANOVA approximation for given index sets and to this end we describe how this index sets could be determined. In Section \ref{sec:4} we develop fast algorithms to evaluate sums of mixed basis functions. Subsection \ref{sec:4.1} provides through Theorem \ref{s:1} a way to compute such sums through an $\nfft$ which is summarized in Algorithm \ref{alg:1}. In Subsection \ref{sec:4.2} we extend the grouped transform \cite{BaPoSchmi21} to the mixed basis using the Algorithm \ref{alg:1}. In Section \ref{sec:5} we show with some experiments how this approximation procedure works. Subsection \ref{sec:5.1} deals with the approximation of a function. There are the steps shown to find good suitable index sets. In Subsection \ref{sec:5.2} we approximate a function where we only have access to uniformly sampled nodes. At this point we compare the approximation with a suitable mixed basis with the approximation with the half period cosine basis and the Fourier basis. Subsection \ref{sec:5.3} presents an approximation on a publicly available benchmark dataset.

\section{Preliminaries}\label{sec:2}
This section presents basic definitions for the rest of this paper. In Subsection \ref{sec:2.1} various function spaces and some of their relations are introduced. We use \cite{schmischkediss} as reference for this. We use these as foundations for our later considerations of the ANOVA decomposition. Subsection \ref{sec:2.2} presents some orthonormal basis. We start by defining some classical one-dimensional basis functions. In the next step, Definition \ref{d:1}, we combine these basis functions for higher dimensions

\subsection{Function Spaces}\label{sec:2.1}
Let $\Dbb\subset\menk{\Tbb^m\times[0,1]^n}{m,n\in\Nbb_0}$ be a measurable set, where $\Tbb=\Rbb/\Zbb$ is the torus which we identify with $[0,1)$. Moreover, let $\fktk{\omega}{\Dbb}{(0,\infty)}$ be a probability measure with $\igl{\Dbb}{\omega(\xbf)}{\xbf}=1$. We define the weighted Lebesgue spaces
\begin{align*}
	\Lp{p}(\Dbb,\omega)\coloneqq\men{\fktk{f}{\Dbb}{\Cbb}}{\igl{\Dbb}{\betr{f(\xbf)}^p\omega(\xbf)}{\xbf}<\infty}
\end{align*}
with the norm $\norm{f}_{\Lp{p}(\Dbb,\omega)}\coloneqq(\igl{\Dbb}{\betr{f(\xbf)}^p\omega(\xbf)}{\xbf})^{\frac{1}{p}}$ for $p\in[1,\infty)$. Furthermore, we define
\begin{align*}
	\Lp{\infty}(\Dbb)\coloneqq\menk{\fktk{f}{\Dbb}{\Cbb}}{\esssup_{\xbf\in\Dbb}\betr{f(\xbf)}<\infty}
\end{align*}
with the norm $\normk{f}_{\Lp{\infty}(\Dbb)}\coloneqq{}{\esssup}_{\xbf\in\Dbb}\betr{f(\xbf)}$. The Lebesgue space $\Lp{2}(\Dbb,\omega)$ forms a Hilbert space with the scalar product $\skpr{f}{g}{\Lp{2}(\Dbb,\omega)}\coloneqq\igl{\Dbb}{f(\xbf)\overline{g(\xbf)}\omega(\xbf)}{\xbf}$. We use the abbreviation $\Lp{p}(\Dbb)\coloneqq\Lp{p}(\Dbb,1)$. Let $\Bcal=(\phi_k)_{k\in\Kcal}$ be a basis with an index set $\Kcal$ for the Hilbert space $\Lp{2}(\Dbb,\omega)$. We define the Wiener space 
\begin{align*}
	\Acal(\Dbb,\omega)\coloneqq\men{f\in\Lp{1}(\Dbb,\omega)}{\sum_{k\in\Kcal}\betr{\skpr{f}{\phi_k}{\Lp{2}(\Dbb,\omega)}}<\infty}
\end{align*}
with the norm $\normk{f}_{\Acal(\Dbb,\omega)}\coloneqq\sum_{k\in\Kcal}\betrk{\skpr{f}{\phi_k}{\Lp{2}(\Dbb,\omega)}}$.
\begin{lem}\label{s:10}
	Let $\Lp{2}(\Dbb,\omega)$ be a weighted Lebesgue space with a basis $(\phi_k)_{k\in\Kcal}$ with $\sup_{k\in\Kcal}\normk{\phi_k}_{\Lp{\infty}(\Dbb)}<\infty$. Then every element of the corresponding Wiener space $\Acal(\Dbb,\omega)$ has a continuous representative.
\end{lem}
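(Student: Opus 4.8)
The plan is to produce the continuous representative explicitly as the series $g\coloneqq\sum_{k\in\Kcal}\hat f_k\,\phi_k$, where $\hat f_k\coloneqq\skpr{f}{\phi_k}{\Lp{2}(\Dbb,\omega)}$ are the expansion coefficients, and then to show (i) that $g$ is continuous and (ii) that $g=f$ almost everywhere, so that $g$ is indeed a representative of $f$.

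First I would establish uniform convergence. Set $M\coloneqq\sup_{k\in\Kcal}\normk{\phi_k}_{\Lp{\infty}(\Dbb)}<\infty$. By the very definition of the Wiener space, $f\in\Acal(\Dbb,\omega)$ means $\sum_{k\in\Kcal}\betr{\hat f_k}=\normk{f}_{\Acal(\Dbb,\omega)}<\infty$. Hence $\normk{\hat f_k\phi_k}_{\Lp{\infty}(\Dbb)}\le M\betr{\hat f_k}$ with $\sum_k M\betr{\hat f_k}<\infty$, so the Weierstrass $M$-test makes the partial sums of $\sum_k\hat f_k\phi_k$ a uniform Cauchy sequence; they converge uniformly to a bounded limit $g$ with $\normk{g}_{\Lp{\infty}(\Dbb)}\le M\normk{f}_{\Acal(\Dbb,\omega)}$. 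Since each $\phi_k$ is continuous in the present setting, the uniform limit $g$ is continuous as well. This is the heart of the statement and the step that actually uses the hypothesis $\sup_k\normk{\phi_k}_{\Lp{\infty}(\Dbb)}<\infty$.

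Next I would identify $g$ with $f$. As $\omega$ is a probability measure, $\Dbb$ has finite measure, so the bounded continuous function $g$ lies in $\Lp{2}(\Dbb,\omega)$. Using uniform convergence together with finiteness of $\omega$, I may interchange integration against any fixed $\phi_j$ with the summation, which yields $\skpr{g}{\phi_j}{\Lp{2}(\Dbb,\omega)}=\sum_k\hat f_k\,\skpr{\phi_k}{\phi_j}{\Lp{2}(\Dbb,\omega)}=\hat f_j=\skpr{f}{\phi_j}{\Lp{2}(\Dbb,\omega)}$ for every $j\in\Kcal$, by orthonormality. Thus $f-g$ has vanishing coefficients against every basis element, and completeness of the orthonormal system $(\phi_k)_{k\in\Kcal}$ forces $f=g$ almost everywhere, giving the desired continuous representative.

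The hard part will be the final implication \emph{vanishing coefficients $\Rightarrow$ zero}: completeness of $(\phi_k)$ delivers this for elements of $\Lp{2}(\Dbb,\omega)$, but a priori $f$ is only assumed to lie in $\Lp{1}(\Dbb,\omega)$, so one must first guarantee $f-g\in\Lp{2}(\Dbb,\omega)$. I would handle this by noting that $(\hat f_k)\in\ell^1\subseteq\ell^2$, so Riesz--Fischer produces an $\Lp{2}$-element with exactly these coefficients; it agrees with the uniform limit $g$ because uniform convergence implies $\Lp{2}$-convergence on the finite measure space. It then remains to argue that $f$ coincides with this $\Lp{2}$-element, i.e. that the only $\Lp{1}(\Dbb,\omega)$-function orthogonal to all $\phi_k$ vanishes (totality of the system in $\Lp{1}$). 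For the concrete Fourier, half period cosine, and Chebyshev systems this totality follows from density of trigonometric and algebraic polynomials in $C(\Dbb)$ combined with finiteness of $\omega$; I would either invoke that density or record it as a standing assumption on the basis.
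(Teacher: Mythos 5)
Your argument is correct and is, in substance, the standard proof behind the paper's citation: the paper itself gives no inline proof of this lemma but defers entirely to \cite[Lemma 2.5]{schmischkediss}, and the argument there is exactly your two-step scheme — Weierstrass $M$-test giving uniform convergence of $\sum_{k\in\Kcal}\hat f_k\phi_k$ to a continuous, bounded limit $g$, followed by the identification $f=g$ almost everywhere. The point where you go beyond what the paper records is also the right one to worry about: since $\Acal(\Dbb,\omega)$ is defined as a subset of $\Lp{1}(\Dbb,\omega)$, completeness of $(\phi_k)_{k\in\Kcal}$ in $\Lp{2}(\Dbb,\omega)$ alone does not give \emph{vanishing coefficients $\Rightarrow$ zero} for the $\Lp{1}$-function $f-g$; one genuinely needs totality of the system over $\Lp{1}(\Dbb,\omega)$, and your patch — uniform density of $\spanm\{\phi_k\}$ in $C(\Dbb)$, which holds for the Fourier, half period cosine, and Chebyshev systems and their tensor products by Fej\'er respectively Stone--Weierstrass, combined with finiteness of the measure $\omega$ — closes this correctly. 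Two small remarks: first, your parenthetical \qq{each $\phi_k$ is continuous in the present setting} is not cosmetic but an essential implicit hypothesis, since the lemma is false for a merely uniformly bounded orthonormal basis (the Walsh system is a uniformly bounded orthonormal basis of $\Lp{2}([0,1])$ whose individual elements lie in the corresponding Wiener space yet admit no continuous representative); second, it is worth noting explicitly that the coefficients $\skprk{f}{\phi_k}{\Lp{2}(\Dbb,\omega)}$ are well defined for $f\in\Lp{1}(\Dbb,\omega)$ precisely because $\sup_{k\in\Kcal}\normk{\phi_k}_{\Lp{\infty}(\Dbb)}<\infty$, so the same hypothesis you use for the $M$-test also makes the definition of $\Acal(\Dbb,\omega)$ meaningful.
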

\begin{proof}
	See \cite[Lemma 2.5]{schmischkediss}.
\end{proof}
\begin{rem}
	Lemma \ref{s:10} provides $\Acal(\Dbb,\omega)\subseteq C(\Dbb)\coloneqq\menk{\fktk{f}{\Dbb}{\Rbb}}{f\text{ continuous}}$. Furthermore, we get
	\begin{align*}
		\norm{f}_{\Lp{\infty}(\Dbb)}&=\esssup_{x\in\Dbb}\betr{\sum_{k\in\Kcal}\skpr{f}{\phi_k}{\Lp{2}(\Dbb,\omega)}\phi_k(x)}\\
		&\leq\sup_{k\in\Kcal}\normk{\phi_k}_{\Lp{\infty}(\Dbb)}\sum_{k\in\Kcal}\betr{\skpr{f}{\phi_k}{\Lp{2}(\Dbb,\omega)}}\\
		&=\sup_{k\in\Kcal}\normk{\phi_k}_{\Lp{\infty}(\Dbb)}\norm{f}_{\Acal(\Dbb,\omega)}.
	\end{align*}
	It follows $\Acal(\Dbb,\omega)\subseteq\Lp{\infty}(\Dbb)$ and $\Acal(\Dbb,\omega)\subseteq \Lp{2}(\Dbb,\omega)$.
\end{rem}
Due to this Lemma, we define the evaluation of a function $f\in\Acal(\Dbb,\omega)$ at a point $\xbf\in\Dbb$ as evaluation of the continuous representative at the point $\xbf$. Next, we consider partial sums of the function $f$ for finite subsets of the index set $\Ical\subset\Kcal$, e.g. $S_\Ical(\Bcal) f\coloneqq\sum_{k\in\Ical}\skpr{f}{\phi_k}{\Lp{2}(\Dbb,\omega)}\phi_k$. Furthermore, we define the set of polynomials related to the finite index set $\Ical$ as
\begin{align}
	\Tcal_\Ical(\Bcal)\coloneqq\men{\sum_{k\in\Ical}c_k\phi_k}{c_k\in\Cbb}\label{eq:30}.
\end{align}

\subsection{Orthonormal Basis}\label{sec:2.2}
In this subsection we firstly introduce the one-dimensional basis functions which we use for the mixed basis.
The functions $\phi_k^{\exp}=\exp(2\pi \im k \cdot)$ form the orthonormal Fourier basis of $\Lp{2}(\Tbb)$. Additionally, the functions $\phi_k^{\cos}=\sqrt{2}\cos(\pi k \cdot)$ form together with the constant function with value one the orthonormal half period cosine basis of $\Lp{2}([0,1])$. The functions $\phi_k^{\alg}=\sqrt{2}\cos(k \arccos(2\cdot-1))$ form together with the constant function with value one the orthonormal Chebyshev basis of $\Lp{2}([0,1],\omega)$ with the weight $\fkt{\omega}{[0,1]}{(0,\infty)}{x}{\frac{1}{\pi\sqrt{x-x^2}}}$. In the following we are going to work with tensor products of these basis functions.
\begin{defi}\label{d:1}
	Let $\dbf$ be a $d$-dimensional tuple over the set $\{\exp,\cos,\alg\}$. We define the sets 
    \begin{align*}
        \Dbb^\dbf\coloneqq\bigtimes_{j=1}^d
        \begin{cases}
            \Tbb,&d_j=\exp\\
            [0,1],&d_j\neq\exp\\
        \end{cases}&&\text{ and }&&\Kbb^\dbf\coloneqq\bigtimes_{j=1}^d
        \begin{cases}
            \Zbb,&d_j=\exp\\
            \Nbb_0,&d_j\neq\exp
        \end{cases}
    \end{align*}
    and the mixed functions 
    \begin{align*}
        \fkt{\phi_\kbf^\dbf}{\Dbb^\dbf}{\Cbb}{\xbf}{\prod_{j=1}^{d}
        \begin{cases}
            1,&k_j=0\\
            \exp(2\pi\im k_j x_j),&d_j=\exp,\;k_j\neq0\\
            \sqrt{2}\cos(\pi k_j x_j),&d_j=\cos,\;k_j\neq0\\
			\sqrt{2}\cos(k_j \arccos(2x_j-1)),&d_j=\alg,\;k_j\neq0
        \end{cases}}
    \end{align*}
    for $\kbf\in\Kbb^\dbf$. Furthermore, we define the weight function 
	\begin{align*}
		\fkt{\omega^\dbf}{\Dbb^\dbf}{(0,\infty)}{\xbf}{\prod_{j=1}^{d}
        \begin{cases}
            1,&d_j\neq \alg\\
			\frac{1}{\pi\sqrt{x_j-x_j^2}},&d_j=\alg
        \end{cases}}\quad.
	\end{align*}
\end{defi}
The mixed functions $\phi_\kbf^\dbf$ form a basis of $\Lp{2}(\Dbb^\dbf,\omega^\dbf)$ because of their tensor product structure and because their factors are Fourier, half period cosine and Chebyshev basis functions. We name this basis $\Bcal^\dbf\coloneqq\menk{\phi_\kbf^\dbf}{\kbf\in\Kbb^\dbf}$.

\section{Interpretable ANOVA Approximation}\label{sec:3}
In this section, we define the ANOVA approximation for the mixed basis. We follow the steps in \cite{PoSc19b}. We do this by writing a function as a series expansion and splitting it into parts. We than define sensitivity indices for the parts to determine how important they are and use them to truncate the series expansion. We start in Subsection \ref{sec:3.1} with defining the ANOVA decomposition, see \cite{CaMoOw97,LiOw06,KuSloWaWo10,Gu2013} and \cite[Appendix A]{Owe13}, in the way like \cite{PoSchmi19,PoSc19b} through a series expansion. We apply this to our setting with the mixed basis. Furthermore, we define analytic global sensitivity indices \cite{So90,So01,LiOw06}. In Subsection \ref{sec:3.2} we describe the procedure of ANOVA approximation \cite{PoSchmi19,PoSc19b,PoSc21}, and we deduce a way to compute it numerically.

\subsection{ANOVA Decomposition}\label{sec:3.1}
Let $f$ be an $\Lp{2}(\Dbb^\dbf,\omega^\dbf)$ function. Since $\phi_\kbf^\dbf$ with $\kbf\in\Kbb^\dbf$ form an orthonormal basis of $\Lp{2}(\Dbb^\dbf,\omega^\dbf)$, $f$ can be written as
\begin{align*}
	f=\sum_{\kbf\in\Kbb^\dbf}c_\kbf^\dbf(f)\phi_\kbf^\dbf
\end{align*}
with coefficients $c^\dbf_\kbf(f)\coloneqq\skprk{f}{\phi_\kbf^\dbf}{\Lp{2}(\Dbb^\dbf,\omega^\dbf)}$. Furthermore, we get the Parseval equality
\begin{align*}
	\norm{f}_{\Lp{2}(\Dbb^\dbf,\omega^\dbf)}^2=\sum_{\kbf\in\Kbb^\dbf}\betrk{c^{\dbf}_\kbf(f)}^2,
\end{align*}
from the fact that $\Bcal^\dbf$ is a basis of $\Lp{2}(\Dbb^\dbf,\omega^\dbf)$.
Next, we decompose the function $f$ into ANOVA terms. We denote subsets of coordinate indices with small boldface letters $\ubf\in\Pcal([d])$. For every subset of indices $\ubf$ we define the ANOVA term
\begin{align*}
	f_{\ubf}(\xbf)\coloneqq\sum_{\substack{\kbf\in\Kbb^\dbf\\\supp\kbf=\ubf}}c^{\dbf}_\kbf(f)\phi_\kbf^\dbf(\xbf).
\end{align*}
Note that such an ANOVA term $f_\ubf(\xbf)$ is independent of $x_j$ if $j\notin\ubf$. We point out, that ANOVA terms $f_\ubf$ of a function $f$ may be smoother than the function $f$ itself, see \cite{LiOw06,GrKuSl10,schmischkediss}. These ANOVA terms $f_\ubf$, $\ubf\subseteq[d]$ decompose the function $f$ uniquely into 
\begin{align*}
	f=\sum_{\kbf\in\Kbb^\dbf}c^{\dbf}_\kbf(f)\phi_\kbf^\dbf=\sum_{\ubf\in\Pcal([d])}f_\ubf.
\end{align*}
This follows since $\menk{\menk{\kbf\in\Kbb^\dbf}{\supp\kbf=\ubf}}{\ubf\in\Pcal([d])}$ is a partition of the set $\Kbb^\dbf$. Additionally, we define the variance of a function $f$ as $\sigma^2(f)=\igl{\Dbb^\dbf}{\betrk{f(\xbf)-c_0^\dbf(f)}^2\omega^\dbf(\xbf)}{\xbf}$, which is equivalent to $\sigma^2(f)=\normk{f}_{\Lp{2}(\Dbb^\dbf,\omega^\dbf)}^2-\betrk{c^{\dbf}_\nullbf(f)}^2$. The Parseval equality states
\begin{align*}
	\sigma^2(f)=\sum_{\kbf\in\Kbb^\dbf\setminus\{\nullbf\}}\betrk{c^{\dbf}_\kbf(f)}^2.
\end{align*}
Furthermore, we get the variance of ANOVA terms $f_\ubf$ through
\begin{align*}
	\sigma^2(f_\ubf) = \sum_{\kbf\in\Kbb^\dbf\setminus\{\nullbf\}}\betrk{c^{\dbf}_\kbf(f_\ubf)}^2 = \sum_{\substack{\kbf\in\Kbb^\dbf\\\supp\kbf=\ubf}}\betrk{c^{\dbf}_\kbf(f_\ubf)}^2=\norm{f_\ubf}_{\Lp{2}(\Dbb^\dbf,\omega^\dbf)}^2.
\end{align*}
Finally, we define the analytic global sensitivity indices (GSI) like \cite{So01} through
\begin{align*}
	\rho(\ubf,f)\coloneqq\frac{\sigma^2(f_\ubf)}{\sigma^2(f)}.
\end{align*}
The analytic GSI's have values in $[0,1]$, and the closer an analytic GSI $\rho(\ubf,f)$ is to 1, the more important is the corresponding ANOVA term $f_\ubf$ for reconstructing the function $f$. We use this information for the construction of good suitable index sets. We point out that the analytic GSIs depend on the weight $\omega^\dbf$. Next we truncate the ANOVA decomposition. We use a set of subsets of indices $U\subseteq\Pcal([d])$ for this truncation. We define 
\begin{align*}
	\Trm_Uf(\xbf)=\sum_{\ubf\in U}f_\ubf(\xbf).
\end{align*}
To find this set $U$, we choose the ANOVA terms $f_\ubf$ with the highest GSI's to get $\Trm_Uf\approx f$. In order to do this, we choose a threshold $\theta\in[0,1)$ and set $U=\menk{\ubf\subset[d]}{\rho(\ubf,f)>\theta}$.

\subsection{Numerical ANOVA Approximation}\label{sec:3.2}
In this section our aim is to approximate a function $f\in\Acal(\Dbb^\dbf,\omega^\dbf)$. We are given a set of $M\in\Nbb$ nodes $\Xcal\subset\Dbb^\dbf,\;\betr{\Xcal}=M$ and the corresponding function values $(f(\xbf))_{\xbf\in\Xcal}\coloneqq\fbf\in\Cbb^M$. We aim to find a mixed polynomial 
\begin{align*}
	\fkt{f^\dbf}{\Dbb^\dbf}{\Cbb}{\xbf}{\sum_{\kbf\in\Ical}\hat{f}_\kbf^\dbf\phi_\kbf^\dbf(\xbf)},\quad\hat{f}^\dbf_\kbf\in\Cbb
\end{align*}
for which $f\approx f^\dbf$ holds, i.e. $\normk{f-f^\dbf}_{\Lp{2}(\Dbb^\dbf,\omega^\dbf)}$ is small and $\Ical$ is a finite subset of $\Kbb^\dbf$. In the next Subsection \ref{sec:3.2.1} we consider some ways to choose $\Ical$. The next Subsection \ref{sec:3.2.2} presents a way to find the mixed polynomial $f^\dbf$ which minimizes the $\Lp{2}(\Dbb^\dbf,\omega^\dbf)$ norm of $f-f^\dbf$ for a given index set $\Ical$ and nodes $\Xcal$. To this end we show in Subsection \ref{sec:3.2.3} how to choose and refine the truncation set $U$.

\subsubsection{Grouped Index Sets}\label{sec:3.2.1}
We present some index sets that are important for this paper. Since these index sets contain frequencies for the mixed basis we call them frequency sets. We begin with frequency sets which are full $d$-dimensional hypercubes, but since their size grows exponential in the dimension $d$ we introduce better controllable frequency sets. We start with the frequency sets which are full $d$-dimensional hypercubes, i.e.
\begin{align}\label{eq:1}
	\Ical_\Nbf^\dbf\coloneqq\bigtimes_{j=1}^d
	\begin{cases}
		\Zbb\cap\left[-\frac{N_j}{2},\frac{N_j}{2}\right),&d_j=\exp\\
		\Nbb_0\cap\left[0,\frac{N_j}{2}\right),&d_j\neq\exp
	\end{cases}
\end{align}
for a vector of bandwidths $\Nbf=(N_j)_{j=1}^d\in(2\Nbb_0)^d$. These sets have the cardinality
\begin{align}
	\betrk{\Ical_\Nbf^\dbf}=\prod_{j=1}^{d}
	\begin{cases}
		N_j,&d_j=\exp\\
		\frac{N_j}{2},&d_j\neq\exp
	\end{cases}\label{eq:12}.
\end{align}
Next, we define frequency sets that can be adjusted more precisely. We use them to construct thinner frequency sets. In detail these frequency sets should have a high bandwidth along the coordinate axes, less bandwidth along the coordinate planes and so on. For this purpose we define the following frequency sets $\tilde{\Ical}_\Nbf^\dbf$ with bandwidths $\Nbf=(N_j)_{j=1}^d\in(2\Nbb_0)^{d}$. Here if $N_j$ is zero the set $\tilde{\Ical}_\Nbf^\dbf$ should only contain the frequency zero, when it is projected onto the dimension $j$. If $N_j$ is not zero the projection onto the dimension $j$ should not contain the frequency zero, because it is contained in a lower dimensional set. This is being done by the frequency set
\begin{align}
	\tilde{\Ical}_\Nbf^\dbf\coloneqq\bigtimes_{j=1}^d
	\begin{cases}
		\{0\},&N_j=0\\
		\Zbb\cap\left[-\frac{N_j}{2},\frac{N_j}{2}\right)\setminus\{0\},&d_j=\exp\text{ and }N_j\neq0\\
		\Nbb_0\cap\left[0,\frac{N_j}{2}\right)\setminus\{0\},&d_j\neq\exp\text{ and }N_j\neq0
	\end{cases}.\label{eq:9}
\end{align}
These frequency sets  $\tilde{\Ical}_\Nbf^\dbf$ have the cardinality
\begin{align}
	\betrk{\tilde{\Ical}_\Nbf^\dbf}=\prod_{j=1}^d
	\begin{cases}
		1,&N_j=0\\
		N_j-1,&d_j=\exp\text{ and }N_j\neq0\\
		\frac{N_j}{2}-1,&d_j\neq\exp\text{ and }N_j\neq0
	\end{cases}\label{eq:20}.
\end{align}
Since these frequency sets are disjoint, if the bandwidths have different support, we can form the union of them to derive a new frequency set. We choose for every $\ubf\in U$ a bandwidth $\Nbf^\ubf=(N_j^\ubf)_{j=1}^{d}\in(2\Nbb)^d$ with $N_j^\ubf\neq0$ for $j\in \ubf$ and $N_j^\ubf=0$ for $j\notin \ubf$ and define the frequency set
\begin{align}
	\Ical(U)\coloneqq\bigcup_{\ubf\in U}\tilde{\Ical}_{\Nbf^\ubf}^\dbf.\label{eq:10}
\end{align}
At this point we have a look at special sets U. We choose a superposition dimension $d_s<d$ and define the superposition set $U_{d_s}$ which contains only subsets $\ubf$ of the size up to $\betrk{\ubf}\leq d_s\in\Nbb$, i.e.
\begin{align}
	U_{d_s}\coloneqq\menk{\ubf\subseteq[d]}{\betrk{\ubf}\leq d_s}\label{eq:21}.
\end{align}
In other words, we focus on the interactions of $d_s$ or less dimensions. Mixed polynomials $\Tcal_{\Ical(U_{d_s})}(\Bcal^\dbf)$ consist of basis functions which have only up to the superposition dimension $d_s$ many interactions between the variables $x_j$. Approximating functions with such basis functions of only low-order interactions is a common problem, see \cite{Holtz11,schmischkediss,XieShiSchWa22,LiPoUl21}. Next we consider the cardinality of the superposition set $U_{d_s}$.
\begin{lem}\label{s:2}
Let $d_s$ be the superposition dimension and $U_{d_s}$ the corresponding superposition set, see \eqref{eq:21}. Furthermore let the entries of $\Nbf^\ubf$ be bounded for all $\ubf\in U_{d_s}$, i.e. $\max_{\substack{\ubf\in U_{d_s}\\i\in[d]}}N_i^\ubf\leq N_{\max}$. Then 
\begin{align}
	\betr{\Ical(U_{d_s})}\leq d_s(dN_{\max})^{d_s}\label{eq:31}
\end{align}
holds for $d\geq2d_s$.
\end{lem}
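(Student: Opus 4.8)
The plan is to reduce the statement to a purely combinatorial estimate and then exploit that the hypothesis $d\ge 2d_s$ forces the top-dimensional contribution to dominate. First I would use the definition \eqref{eq:10} of $\Ical(U_{d_s})$ as a union indexed by $\ubf\in U_{d_s}$. Since the sets $\tilde{\Ical}_{\Nbf^\ubf}^\dbf$ have pairwise different support they are disjoint, so in fact $\betr{\Ical(U_{d_s})}=\sum_{\ubf\in U_{d_s}}\betrk{\tilde{\Ical}_{\Nbf^\ubf}^\dbf}$; for the upper bound the subadditivity inequality $\betr{\Ical(U_{d_s})}\le\sum_{\ubf\in U_{d_s}}\betrk{\tilde{\Ical}_{\Nbf^\ubf}^\dbf}$ is all I need.

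Next I would bound a single summand using the cardinality formula \eqref{eq:20}. For $\ubf$ with $\betrk{\ubf}=k$ every one of the $k$ nontrivial factors equals either $N_j^\ubf-1$ or $\tfrac{N_j^\ubf}{2}-1$, each of which is at most $N_{\max}-1<N_{\max}$, while the $d-k$ coordinates outside $\ubf$ contribute the factor $1$. Hence $\betrk{\tilde{\Ical}_{\Nbf^\ubf}^\dbf}\le N_{\max}^{k}$. Grouping the sum by $k=\betrk{\ubf}$ and counting the $\binom{d}{k}$ subsets of $[d]$ of size $k$ that belong to $U_{d_s}$ (here I use \eqref{eq:21}) gives
\begin{align*}
	\betr{\Ical(U_{d_s})}\le\sum_{k=0}^{d_s}\binom{d}{k}N_{\max}^{k}\le\sum_{k=0}^{d_s}(dN_{\max})^{k},
\end{align*}
where the last step uses $\binom{d}{k}\le d^{k}$.

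The remaining, and main, step is to show that this weighted partial binomial sum is bounded by $d_s(dN_{\max})^{d_s}$. Writing $T_k\coloneqq\binom{d}{k}N_{\max}^{k}$, the ratio of consecutive terms is $T_{k-1}/T_k=\tfrac{k}{(d-k+1)N_{\max}}$, and here the hypotheses enter: for $1\le k\le d_s$ and $d\ge 2d_s$ one has $d-k+1\ge d_s+1$, and since $\Nbf^\ubf\in(2\Nbb)^d$ forces $N_{\max}\ge 2$, the ratio is at most $\tfrac{d_s}{2(d_s+1)}<\tfrac12$. Thus the terms decay geometrically below the top one, so $\sum_{k=0}^{d_s}T_k<2T_{d_s}\le 2(dN_{\max})^{d_s}$. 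For $d_s\ge 2$ the factor $2$ is absorbed by the leading $d_s$, which proves the claim; the boundary case $d_s=1$ I would treat directly, using the sharper factor $N_{\max}-1$ to obtain $\betr{\Ical(U_1)}\le 1+d(N_{\max}-1)\le dN_{\max}$. I expect the one delicate point to be exactly this final constant: the naive ``number of subsets times largest block'' estimate only yields $(d_s+1)(dN_{\max})^{d_s}$, and it is the geometric decay guaranteed jointly by $d\ge 2d_s$ and $N_{\max}\ge 2$ that sharpens the $d_s+1$ to $d_s$.
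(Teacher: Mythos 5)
Your proof is correct, and it takes a genuinely different route in the decisive combinatorial step. Both arguments begin identically: decompose $\Ical(U_{d_s})$ into the disjoint blocks $\tilde{\Ical}_{\Nbf^\ubf}^\dbf$ and bound each block via \eqref{eq:20}. The paper then applies the uniform block bound $N_{\max}^{d_s}$ to every $\ubf$ and controls the number of blocks by $\betrk{U_{d_s}}\le\sum_{i=1}^{d_s}\binom{d}{i}\le d_s\binom{d}{d_s}\le d_sd^{d_s}$, so for the paper the hypothesis $d\ge 2d_s$ enters only through the unimodality of binomial coefficients, i.e. $\binom{d}{i}\le\binom{d}{d_s}$ for $i\le d_s\le d/2$. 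You instead keep the $k$-dependent block bound $N_{\max}^{k}$ for $\betrk{\ubf}=k$, group by $k$, and estimate the weighted sum $\sum_{k=0}^{d_s}\binom{d}{k}N_{\max}^{k}$ by a geometric-series argument, which uses $d\ge 2d_s$ \emph{together with} $N_{\max}\ge 2$ (forced by the nonzero entries of $\Nbf^\ubf$ being even positive integers), plus a separate direct treatment of $d_s=1$. What each approach buys: the paper's version is shorter and never needs $N_{\max}\ge2$; yours is more scrupulous about the term $\ubf=\emptyset$. Indeed, the paper's count starts at $i=1$ and thus silently omits $\emptyset\in U_{d_s}$; once it is restored, the inequality $\betrk{U_{d_s}}\le d_s\binom{d}{d_s}$ fails for $d_s=1$ (then $\betrk{U_1}=d+1>d$), which is exactly the boundary case you isolate and settle with the sharper factor $N_{\max}-1$. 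So your geometric-decay argument not only replaces the paper's key counting inequality but also quietly repairs the $d_s=1$ edge case that the paper's proof glosses over.
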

\begin{proof}
	We have a look at the cardinality of the frequency set $\Ical(U_{d_s})$ and using the fact that the sets $\tilde{\Ical}_{\Nbf^\ubf}^\dbf$ are disjoint we get
	$\betrk{\Ical(U)}=\betrk{\bigcup_{\ubf\in U_{d_s}}}=\sum_{\ubf\in U_{d_s}}\betrk{\tilde{\Ical}_{\Nbf^\ubf}^\dbf}$. In the first part we show $\betrk{\tilde{\Ical}_{\Nbf^\ubf}^\dbf}\leq (N_{\max})^{d_s}$ and in the second part $\betrk{U_{d_s}}\leq d_sd^{d_s}$.\\
	Since $\ubf\in U_{d_s}$ has at most $d_s$ entries, $\Nbf^\ubf$ has at most $d_s$ non-zero entries and using \eqref{eq:20} we get $\betrk{\tilde{\Ical}_{\Nbf^\ubf}^\dbf}\leq (N_{\max})^{d_s}$.\\
	We can estimate the cardinality of the truncation set $U_{d_s}$ as 
	\begin{align*}
		\betr{U_{d_s}}\leq\betr{\menk{\ubf\subset[d]}{\betrk{\ubf}\leq d_s}}=\sum_{i=1}^{d_s}\binom{d}{i}\leq d_s\binom{d}{d_s}\leq d_sd^{d_s}.
	\end{align*}
	This gives us $\betrk{\Ical(U_{d_s})}=\sum_{\ubf\in U_{d_s}}\betrk{\tilde{\Ical}_{\Nbf^\ubf}^\dbf}\leq\sum_{\ubf\in U_{d_s}}(N_{\max})^{d_s}\leq d_sd^{d_s}(N_{\max})^{d_s}=d_s(dN_{\max})^{d_s}$
\end{proof}
The Equation \eqref{eq:31} shows that the frequency set $\Ical(U_{d_s})$, with a superposition set $U_{d_s}$ containing only subsets $\ubf$ of the size up to $\betrk{\ubf}\leq d_s\in\Nbb$, grows only polynomially with the power $d_s$, which is an improvement over the exponentially growing full $d$-dimensional hypercubes $\Ical_\Nbf^\dbf$.
\begin{figure}
	\centering
	\tdplotsetmaincoords{76}{100}
	\begin{tikzpicture}
		[tdplot_main_coords,scale=0.66]
		\def\yl{0}
		\foreach \y in {-9,-8,...,-6}{
			\foreach \z in {0}{
				\draw[fill={rgb,255:red,0; green,255; blue,0},rounded corners=0.001] (\yl+0.5, \y-0.5, \z-0.5) -- (\yl+0.5, \y+0.5, \z-0.5) -- (\yl+0.5, \y+0.5, \z+0.5) -- (\yl+0.5, \y-0.5, \z+0.5) -- cycle;
			}
		}
		\foreach \y in {0}{
			\foreach \z in {4}{
				\draw[fill={rgb,255:red,255; green,0; blue,0},rounded corners=0.001] (\yl+0.5, \y-0.5, \z-0.5) -- (\yl+0.5, \y+0.5, \z-0.5) -- (\yl+0.5, \y+0.5, \z+0.5) -- (\yl+0.5, \y-0.5, \z+0.5) -- cycle;
			}
		}
		\foreach \y in {5,6,...,8}{
			\foreach \z in {0}{
				\draw[fill={rgb,255:red,0; green,255; blue,0},rounded corners=0.001] (\yl+0.5, \y-0.5, \z-0.5) -- (\yl+0.5, \y+0.5, \z-0.5) -- (\yl+0.5, \y+0.5, \z+0.5) -- (\yl+0.5, \y-0.5, \z+0.5) -- cycle;
			}
		}
		\def\yl{3}
		\foreach \y in {-5,-4,...,-1}{
			\foreach \z in {0}{
				\draw[fill={rgb,255:red,0; green,255; blue,255},rounded corners=0.001] (\yl+0.5, \y-0.5, \z-0.5) -- (\yl+0.5, \y+0.5, \z-0.5) -- (\yl+0.5, \y+0.5, \z+0.5) -- (\yl+0.5, \y-0.5, \z+0.5) -- cycle;
			}
		}
		\foreach \y in {1,2,...,4}{
			\foreach \z in {0}{
				\draw[fill={rgb,255:red,0; green,255; blue,255},rounded corners=0.001] (\yl+0.5, \y-0.5, \z-0.5) -- (\yl+0.5, \y+0.5, \z-0.5) -- (\yl+0.5, \y+0.5, \z+0.5) -- (\yl+0.5, \y-0.5, \z+0.5) -- cycle;
			}
		}
		\def\yl{7}
		\foreach \y in {0}{
			\foreach \z in {0}{
				\draw[fill={rgb,255:red,0; green,0; blue,255},rounded corners=0.001] (\yl+0.5, \y-0.5, \z-0.5) -- (\yl+0.5, \y+0.5, \z-0.5) -- (\yl+0.5, \y+0.5, \z+0.5) -- (\yl+0.5, \y-0.5, \z+0.5) -- cycle;
			}
		}

		\def\zl{0}
		\foreach \x in {0}{
			\foreach \y in {-9,-8,...,-1}{
				\draw[fill={rgb,255:red,127; green,255; blue,127},rounded corners=0.001] (\x-0.5, \y-0.5, \zl+0.5) -- (\x+0.5, \y-0.5, \zl+0.5) -- (\x+0.5, \y+0.5, \zl+0.5) -- (\x-0.5, \y+0.5, \zl+0.5) -- cycle;
			}
		}
		\foreach \x in {1,2,...,3}{
			\foreach \y in {-5,-4,...,-1}{
				\draw[fill={rgb,255:red,127; green,255; blue,225},rounded corners=0.001] (\x-0.5, \y-0.5, \zl+0.5) -- (\x+0.5, \y-0.5, \zl+0.5) -- (\x+0.5, \y+0.5, \zl+0.5) -- (\x-0.5, \y+0.5, \zl+0.5) -- cycle;
			}
		}
		\foreach \x in {3,4,...,7}{
			\foreach \y in {0}{
				\draw[fill={rgb,255:red,127; green,127; blue,225},rounded corners=0.001] (\x-0.5, \y-0.5, \zl+0.5) -- (\x+0.5, \y-0.5, \zl+0.5) -- (\x+0.5, \y+0.5, \zl+0.5) -- (\x-0.5, \y+0.5, \zl+0.5) -- cycle;
			}
		}
		\foreach \x in {1,2,...,3}{
			\foreach \y in {1,2,...,4}{
				\draw[fill={rgb,255:red,127; green,255; blue,255},rounded corners=0.001] (\x-0.5, \y-0.5, \zl+0.5) -- (\x+0.5, \y-0.5, \zl+0.5) -- (\x+0.5, \y+0.5, \zl+0.5) -- (\x-0.5, \y+0.5, \zl+0.5) -- cycle;
			}
		}
		\foreach \x in {0}{
			\foreach \y in {1,2,...,8}{
				\draw[fill={rgb,255:red,127; green,255; blue,127},rounded corners=0.001] (\x-0.5, \y-0.5, \zl+0.5) -- (\x+0.5, \y-0.5, \zl+0.5) -- (\x+0.5, \y+0.5, \zl+0.5) -- (\x-0.5, \y+0.5, \zl+0.5) -- cycle;
			}
		}
		\def\zl{4}
		\foreach \x in {0}{
			\foreach \y in {0}{
				\draw[fill={rgb,255:red,225; green,127; blue,127},rounded corners=0.001] (\x-0.5, \y-0.5, \zl+0.5) -- (\x+0.5, \y-0.5, \zl+0.5) -- (\x+0.5, \y+0.5, \zl+0.5) -- (\x-0.5, \y+0.5, \zl+0.5) -- cycle;
			}
		}

		\def\xl{0}
		\foreach \x in {4,5,...,7}{
			\foreach \z in {0}{
				\draw[fill={rgb,255:red,0; green,0; blue,127},rounded corners=0.001] (\x-0.5, \xl+0.5, \z-0.5) -- (\x+0.5, \xl+0.5, \z-0.5) -- (\x+0.5, \xl+0.5, \z+0.5) -- (\x-0.5, \xl+0.5, \z+0.5) -- cycle;
			}
		}
		\foreach \x in {0}{
			\foreach \z in {1,2,...,4}{
				\draw[fill={rgb,255:red,127; green,0; blue,0},rounded corners=0.001] (\x-0.5, \xl+0.5, \z-0.5) -- (\x+0.5, \xl+0.5, \z-0.5) -- (\x+0.5, \xl+0.5, \z+0.5) -- (\x-0.5, \xl+0.5, \z+0.5) -- cycle;
			}
		}
		\def\xl{4}
		\foreach \x in {1,2,...,3}{
			\foreach \z in {0}{
				\draw[fill={rgb,255:red,0; green,127; blue,127},rounded corners=0.001] (\x-0.5, \xl+0.5, \z-0.5) -- (\x+0.5, \xl+0.5, \z-0.5) -- (\x+0.5, \xl+0.5, \z+0.5) -- (\x-0.5, \xl+0.5, \z+0.5) -- cycle;
			}
		}
		\def\xl{8}
		\foreach \x in {0}{
			\foreach \z in {0}{
				\draw[fill={rgb,255:red,0; green,127; blue,0},rounded corners=0.001] (\x-0.5, \xl+0.5, \z-0.5) -- (\x+0.5, \xl+0.5, \z-0.5) -- (\x+0.5, \xl+0.5, \z+0.5) -- (\x-0.5, \xl+0.5, \z+0.5) -- cycle;
			}
		}

		\def\yl{2}
		\foreach \y in {0}{
			\foreach \z in {1,2,...,3}{
				\draw[fill={rgb,255:red,255; green,0; blue,255},rounded corners=0.001] (\yl+0.5, \y-0.5, \z-0.5) -- (\yl+0.5, \y+0.5, \z-0.5) -- (\yl+0.5, \y+0.5, \z+0.5) -- (\yl+0.5, \y-0.5, \z+0.5) -- cycle;
			}
		}
		\def\zl{3}
		\foreach \x in {1,2,...,2}{
			\foreach \y in {0}{
				\draw[fill={rgb,255:red,225; green,127; blue,225},rounded corners=0.001] (\x-0.5, \y-0.5, \zl+0.5) -- (\x+0.5, \y-0.5, \zl+0.5) -- (\x+0.5, \y+0.5, \zl+0.5) -- (\x-0.5, \y+0.5, \zl+0.5) -- cycle;
			}
		}
		\def\xl{0}
		\foreach \x in {1,2,...,2}{
			\foreach \z in {1,2,...,3}{
				\draw[fill={rgb,255:red,127; green,0; blue,127},rounded corners=0.001] (\x-0.5, \xl+0.5, \z-0.5) -- (\x+0.5, \xl+0.5, \z-0.5) -- (\x+0.5, \xl+0.5, \z+0.5) -- (\x-0.5, \xl+0.5, \z+0.5) -- cycle;
			}
		}

	\draw[->,>=latex,very thick] (7.5,0,0) -- (11,0,0) node[anchor=north]{$x_2$};
	\draw[->,>=latex,very thick] (0,8.5,0) -- (0,9,0) node[anchor=north]{$x_1$};
	\draw[->,>=latex,very thick] (0,0,4.5) -- (0,0,5) node[anchor=west]{$x_3$};
\end{tikzpicture}
\caption{Frequency set $\Ical(U)$ for $\dbf=\begin{psmallmatrix}\exp\\\alg\\\cos\end{psmallmatrix}$ and $U=\{\emptyset,\{1\},\{2\},\{3\},\{1,2\},\{2,3\}\}$ with $\Nbf^{\{1\}}=\begin{psmallmatrix}18\\0\\0\end{psmallmatrix}$, $\Nbf^{\{2\}}=\begin{psmallmatrix}0\\16\\0\end{psmallmatrix}$, $\Nbf^{\{3\}}=\begin{psmallmatrix}0\\0\\10\end{psmallmatrix}$, $\Nbf^{\{1,2\}}=\begin{psmallmatrix}10\\8\\0\end{psmallmatrix}$, and $\Nbf^{\{2,3\}}=\begin{psmallmatrix}0\\6\\8\end{psmallmatrix}$. The frequency set $\tilde{\Ical}_{\Nbf^{\{1\}}}^\dbf$ is shown in green, $\tilde{\Ical}_{\Nbf^{\{2\}}}^\dbf$ in blue, $\tilde{\Ical}_{\Nbf^{\{3\}}}^\dbf$ in red, $\tilde{\Ical}_{\Nbf^{\{1,2\}}}^\dbf$ in cyan, and $\tilde{\Ical}_{\Nbf^{\{2,3\}}}^\dbf$ in magenta.}\label{f:1}
\end{figure}
\begin{expl}
	We show in Figure \ref{f:1} a typical example for such a set $\Ical(U)$, where $\dbf=\begin{psmallmatrix}\exp\\\alg\\\cos\end{psmallmatrix}$ and $U=\{\emptyset,\{1\},\{2\},\{3\},\{1,2\},\{2,3\}\}$. Here we use the bandwidths $\Nbf^{\{1\}}=\begin{psmallmatrix}18\\0\\0\end{psmallmatrix}$, $\Nbf^{\{2\}}=\begin{psmallmatrix}0\\16\\0\end{psmallmatrix}$, $\Nbf^{\{3\}}=\begin{psmallmatrix}0\\0\\10\end{psmallmatrix}$, $\Nbf^{\{1,2\}}=\begin{psmallmatrix}10\\8\\0\end{psmallmatrix}$, and $\Nbf^{\{2,3\}}=\begin{psmallmatrix}0\\6\\8\end{psmallmatrix}$. The parts $\tilde{\Ical}^\dbf_{\Nbf^\ubf}$ of this frequency set $\Ical(U)$ are shown in different colors. The set $\tilde{\Ical}_{\Nbf^{\{1\}}}^\dbf$ is shown in green, $\tilde{\Ical}_{\Nbf^{\{2\}}}^\dbf$ in blue, $\tilde{\Ical}_{\Nbf^{\{3\}}}^\dbf$ in red, $\tilde{\Ical}_{\Nbf^{\{1,2\}}}^\dbf$ in cyan, and $\tilde{\Ical}_{\Nbf^{\{2,3\}}}^\dbf$ in magenta.
\end{expl}

\subsubsection{Approximation}\label{sec:3.2.2}
In this section we assume that the set $U$ and the bandwidths $\Nbf^\ubf$ are known. For a way to choose them we refer to Subsection \ref{sec:3.2.3}. Now, we approximate the truncated function $\Trm_Uf$  with a mixed polynomial $f^\dbf\in \Tcal_{\Ical(U)}(\Bcal^{\dbf})$, where the set of polynomials $\Tcal_{\Ical(U)}(\Bcal^{\dbf})$ is defined in \eqref{eq:30}. As result, we get  $f\approx\Trm_Uf\approx f^\dbf$.\\
The mixed polynomial is completely determined by finitely many mixed coefficients $(c^{\dbf}_\kbf(f^\dbf))_{\kbf\in\Ical(U)}\in\Cbb^{\betr{\Ical(U)}}$. Now it is our goal to find an approximation $\hat{\fbf}^\dbf\approx(c^{\dbf}_\kbf(f^\dbf))_{\kbf\in\Ical(U)}$ to this mixed coefficients. To achieve this, we compute the least squares solution
\begin{align*}
	\normk{f-f^\dbf}_{\Lp{2}(\Dbb^\dbf,\omega^\dbf)}^2=\igl{\Dbb^\dbf}{\betrk{f(\xbf)-f^\dbf(\xbf)}^2\omega^\dbf(\xbf)}{\xbf}.
\end{align*}
We approximate this integral by evaluating the function $\betrk{f(\xbf)-f^\dbf(\xbf)}^2$ at the $M=\betrk{\Xcal}$ given nodes $\Xcal$, where we know the values $\fbf=(f(\xbf))_{\xbf\in\Xcal}$. For this approximation we assume that the nodes in $\Xcal$ are distributed in $\Dbb^\dbf$ with the density $\omega^\dbf$. We get
\begin{align*}
	\normk{f-f^\dbf}_{\Lp{2}(\Dbb^\dbf,\omega^\dbf)}^2&\approx\frac{1}{M}\sum_{\xbf\in\Xcal}\betrk{f(\xbf)-f^\dbf(\xbf)}^2\\
	&=\frac{1}{M}\normk{\fbf-(f^\dbf(\xbf))_{\xbf\in\Xcal}}_2^2\\
	&=\frac{1}{M}\normk{\fbf-\Phibf(\Xcal,\Ical(U))\hat{\fbf}^\dbf}_2^2,
\end{align*}
where $\Phibf(\Xcal,\Ical(U))$ is the matrix $(\phi_\kbf^\dbf(\xbf))_{\xbf\in\Xcal,\;\kbf\in\Ical(U)}\in\Cbb^{M\times\betrk{\Ical(U)}}$. Now we choose $\hat{\fbf}^\dbf$ such that the distance between the function $f$ and the approximation $f^\dbf$ is as small as possible, i.e.
\begin{align}
	\hat{\fbf}^\dbf\coloneqq\argmin_{\hat{\hbf}^\dbf\in\Cbb^{\betr{\Ical(U)}}}\normk{\fbf-\Phibf(\Xcal,\Ical(U)) \hat{\hbf}^\dbf}_2^2\label{eq:5}.
\end{align}
We solve the minimization problem \eqref{eq:5} with the LSQR algorithm \cite{PaSa82}. This algorithm multiplies in every iteration step with the matrix $\Phibf(\Xcal,\Ical(U))$ and its adjoint $\Phibf(\Xcal,\Ical(U))^*$. In Section \ref{sec:4} we develop fast algorithms to multiply with this kind of matrices $\Phibf(\Xcal,\Ical(U))$ and $\Phibf(\Xcal,\Ical(U))^*$. We point out that the number of iterations of the LSQR algorithm depends on the condition number of the underlying matrix. This condition number is in many applications much better than the worst case estimation, see e.g. \cite{BoPo06,BoPoKu07}.\\
We obtain the approximation 
\begin{align*}
	\fkt{f^\dbf}{\Dbb^\dbf}{\Cbb}{\xbf}{\sum_{\kbf\in \Ical(U)}\hat{f}^\dbf_\kbf\phi_\kbf^\dbf(\xbf)}
\end{align*}
for the function $f$. 
Let $\Xcal_\mathrm{test}\subset\Dbb^\dbf,\;\betrk{\Xcal_\mathrm{test}}=M_\mathrm{test}$ be a set with $M_\mathrm{test}\in\Nbb$ nodes, where we evaluate the approximation $f^\dbf$. Then $(f^\dbf(\xbf))_{\xbf\in\Xcal_\mathrm{test}}=\Phibf(\Xcal_\mathrm{test},\Ical(U))\hat{\fbf}^\dbf$ holds. 

\subsubsection{How to choose the truncation set $U$?}\label{sec:3.2.3}
We choose $U$ in two steps, firstly we choose a large superposition set $U_{d_s}$. Then, we calculate the mixed coefficients $\hat{\fbf}^\dbf$ according to this set $U_{d_s}$ for the approximating mixed polynomial $f^\dbf$. In this way we get approximated global sensitivity indices $\rho(\ubf,f^\dbf)$ for every $\ubf\in U_{d_s}$. Using these approximated GSIs we can refine the set $U_{d_s}$ to the final set $U$.\\
Firstly, we choose a superposition dimension  $d_s<d$ and use the corresponding superposition set $U_{d_s}$ which contains only subsets $\ubf$ of the size up to $\betrk{\ubf}\leq d_s\in\Nbb$, see \eqref{eq:31}.
Then we choose appropriate bandwidths $\Nbf^\ubf$. It is good to choose the bandwidths in a way that we have an oversampling, i.e. $\betrk{\Ical(U_{d_s})}<M$ with $\betrk{\Ical(U_{d_s})}=\sum_{\ubf\in U_{d_s}}\betrk{\tilde{\Ical}^\dbf_{\Nbf^\ubf}}$. For $\betrk{\tilde{\Ical}^\dbf_{\Nbf^\ubf}}$ we refer to \eqref{eq:20}.
At this point we refer to Lemma \ref{s:2}, which states that $d_s$ gives an upper bound on the growth rate of $\Ical(U_{d_s})$, i.e. if $d_s=2$ the cardinality of the set $\Ical(U_{d_s})$ grows quadratically with the value of the largest bandwidth.\\
As the next step we calculate the mixed coefficients $\hat{\fbf}^\dbf$ for the approximating mixed polynomial $f^\dbf$ in the way we described it in the previous part. Using these mixed coefficients $\hat{\fbf}^\dbf=(\hat{f}_\kbf^\dbf)_{\kbf\in\Ical(U_{d_s})}$ we calculate the approximated global sensitivity indices $\rho(\ubf,f^\dbf)$ for the mixed polynomial $f^\dbf$ for all $\ubf\in U_{d_s}$ through
\begin{align*}
	\rho(\ubf,f^\dbf)=\frac{\sigma^2(f^\dbf_\ubf)}{\sigma^2(f^\dbf)}=\frac{\displaystyle{\sum_{\substack{\kbf\in\Kbb^\dbf\\\supp\kbf=\ubf}}\betrk{\hat{f}_\kbf^\dbf}^2}}{\displaystyle{\sum_{\kbf\in\Kbb^\dbf\setminus\{\nullbf\}}\betrk{\hat{f}_\kbf^\dbf}^2}}=\frac{\displaystyle{\sum_{\kbf\in\tilde{\Ical}^\dbf_{\Nbf^\ubf}}\betrk{\hat{f}_\kbf^\dbf}^2}}{\displaystyle{\sum_{\kbf\in\Ical(U_{d_s})\setminus\{\nullbf\}}\betrk{\hat{f}_\kbf^\dbf}^2}}.
\end{align*}
Since the mixed polynomial $f^\dbf$ approximates the function $f$, the approximated GSI $\rho(\ubf,f^\dbf)$ should approximate the analytic GSI $\rho(\ubf,f)$.\\
To this end we choose a threshold $\theta>0$ and define the set
\begin{align}
	U_\theta\coloneqq\menk{\ubf\in U_{d_s}}{\rho(\ubf,f^\dbf)>\theta}\label{eq:25}.
\end{align}
With this set $U_\theta$ we do the approximating procedure again. At this stage, we optimize the bandwidths to avoid overfitting and underfitting we do this through cross-validation.

\section{Fast Evaluation of Mixed Polynomials}\label{sec:4}
In this section, we develop a fast algorithm for evaluating sums of the mixed basis functions $\phi_\kbf^\dbf$, i.e. 
\begin{align}
	f^\dbf\coloneqq\sum_{\kbf\in\Ical}\hat{f}^\dbf_\kbf\phi_\kbf^\dbf\label{eq:8}
\end{align}
with arbitrary coefficients $\hat{f}^\dbf_\kbf\in\Cbb$ on a finite index sets $\Ical\subseteq\Kbb^\dbf$ at $M\in\Nbb$ nodes $\Xcal\subset\Dbb^\dbf$, $\betrk{\Xcal}=M$ simultaneously. This evaluation is equivalent to the matrix-vector multiplication of the non-equidistant mixed matrix 
\begin{align}
	\Phibf(\Xcal,\Ical)\coloneqq(\phi_\kbf^\dbf(\xbf))_{\xbf\in\Xcal,\kbf\in\Ical}\label{eq:16}
\end{align}
with the vector $\hat{\fbf}^\dbf=(\hat{f}^\dbf_\kbf)_{\kbf\in\Ical}$, where $\Xcal$ is the set of the nodes at which we are interested to evaluate the mixed polynomial $f^\dbf$. This algorithm is the missing component to do the numerical ANOVA approximation fast with mixed basis functions. In the Subsection \ref{sec:4.1} we consider the index set $\Ical=\Ical_\Nbf^\dbf$, defined in \eqref{eq:1}. Since we will use the thinner index sets $\Ical=\Ical(U)$, defined in \eqref{eq:10}, we will introduce in Subsection \ref{sec:4.2} an algorithm for this case which will rely on the algorithm in Subsection \ref{sec:4.1}.

\subsection{Non-Equidistant Fast Mixed Transform}\label{sec:4.1}
In this subsection, we present a method with a computational cost of only $\Ocal(\betrk{\Ical^\dbf_\Nbf}\log\betrk{\Ical^\dbf_\Nbf}+\betrk{\log\epsilon}^dM)$ for the evaluation of mixed polynomials $f^\dbf$ with the frequency set $\Ical_\Nbf^\dbf$. This is faster than the straightforward matrix vector multiplication with the matrix $\Phibf(\Xcal,\Ical_\Nbf^\dbf)$ which takes $\Ocal(\betrk{\Ical^\dbf_\Nbf}M)$ arithmetical operations.\\
We point out that the mixed polynomial $f^\dbf$ with $\dbf=(\exp,\ldots{},\exp)\eqqcolon\expbf$ is a trigonometric polynomial, which can be evaluated through the non-equidistant fast Fourier transform ($\nfft$) \cite[Chapter 7]{PlPoStTa18} with a computational cost of $\Ocal(\betrk{\Ical^\expbf_\Nbf}\log\betrk{\Ical^\expbf_\Nbf}+\betrk{\log\epsilon}^dM)$, where $\epsilon$ is the required precision, $\betrk{\Ical^\expbf_\Nbf}$ is the cardinality of the frequency set given in \eqref{eq:12}, and $M$ is the number of nodes where we evaluate the mixed polynomial $f^\expbf$. Now, we make use of the $\nfft$ in order to evaluate arbitrary mixed polynomials $f^\dbf$. We use the identity 
\begin{align}
	\sum_{k=0}^{N-1}\hat{f}^{\cos}_k\underbrace{\sqrt{2}^{\;1-\delta_{k,0}}\cos(\pi k x)}_{=\phi_k^{\cos}(x)}=\sum_{k=-N+1}^{N-1}\underbrace{2^{\delta_{k,0}-1}\hat{f}^{\cos}_{\betrk{k}}\sqrt{2}^{1-\delta_{k,0}}}_{\eqqcolon\hat{f}^{\exp}_k}\underbrace{\exp(\pi\im k x)}_{=\phi_k^{\exp}\left(\frac{x}{2}\right)}\label{eq:6m}
\end{align}
to transform cosine polynomials $f^{\cos}$ into trigonometric polynomials $f^{\exp}$. Additionally, we use the identity 
\begin{align}
	\sum_{k=0}^{N-1}\hat{f}^{\alg}_k\underbrace{\sqrt{2}^{\;1-\delta_{k,0}}\cos(k \arccos(2x-1))}_{=\phi_k^{\alg}(x)}=\sum_{k=-N+1}^{N-1}\underbrace{2^{\delta_{k,0}-1}\hat{f}^{\alg}_{\betrk{k}}\sqrt{2}^{1-\delta_{k,0}}}_{\eqqcolon\hat{f}^{\exp}_k}\underbrace{\exp(\im k \arccos(2x-1))}_{=\phi_k^{\exp}\left(\frac{\arccos(2x-1)}{2\pi}\right)}\label{eq:7}.
\end{align}
to transform algebraic polynomials $f^{\alg}$ into trigonometric polynomials $f^{\exp}$.
It follows, that one dimensional polynomials of the form $f^{\cos}$ and $f^{\alg}$ can be evaluated through an $\nfft$. Since our mixed basis functions $\phi_\kbf^\dbf$ have a tensor product structure, we use the identities \eqref{eq:6m} and \eqref{eq:7} in every dimension where the half period cosine basis or the Chebyshev basis is used.
\begin{thm}\label{s:1}
	Let $\hat{\fbf}^\dbf=(\hat{f}^\dbf_{\kbf})_{\kbf\in\Ical_\Nbf^\dbf}\in\Cbb^{\betr{\Ical_\Nbf^\dbf}}$ be a coefficient vector for a mixed polynomial $f^\dbf$ defined in \eqref{eq:8} and an arbitrary $\dbf\in\{\exp,\cos,\alg\}^d$ and $d\in\Nbb$. We define the coefficient vector $\hat{\fbf}^{\expbf}=(\hat{f}^{\expbf}_\kbf)_{\kbf\in\Ical_\Nbf^{\expbf}}\in\Cbb^{\betrk{\Ical_\Nbf^{\expbf}}}$ through
	\begin{align}
		\hat{f}^{\expbf}_\kbf\coloneqq \hat{f}_{\sbf(\kbf)}^\dbf\prod_{j=1}^{d}
		\begin{cases}
			1,&d_j=\exp\text{ or }k_j=0\\
			0,&d_j\neq\exp\text{ and }k_j=-\frac{N_j}{2}\\
			\frac{\sqrt{2}}{2},&\text{else}\label{eq:13}
		\end{cases}
	\end{align}
    for all $\kbf\in\Ical_\Nbf^{\expbf}$, where $\sbf$ is the index transformation
	\begin{align}
		\fkt{\sbf}{\Ical_\Nbf^\expbf}{\Ical_\Nbf^\dbf}{\kbf}{\left(
		\begin{cases}
			k_j,& d_j=\exp\\
			\betrk{k_j},&d_j\neq\exp\text{ and }k_j\neq-\frac{N_j}{2}\\
			0,&d_j\neq\exp\text{ and }k_j=-\frac{N_j}{2}
		\end{cases}
		\right)_{j=1}^d}\label{eq:15}.
	\end{align}
	Furthermore, we define the function $\fkt{\tbf}{\Dbb^\dbf}{\Dbb^{\expbf}}{\xbf}{(t_j(x_j))_{j=1}^d}$ with
	\begin{align*}
		\fkt{t_j}{\Dbb^{d_j}}{\Dbb^{\exp}}{x}{
			\begin{cases}
				x,&d_j=\exp\\
				\frac{x}{2},&d_j=\cos\\
				\frac{\arccos(2x-1)}{2\pi},&d_j=\alg
			\end{cases}}.
	\end{align*}
	Then the identity $f^{\dbf}=f^{\expbf}\circ\tbf$ holds.
\end{thm}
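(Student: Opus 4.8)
The plan is to establish the asserted pointwise identity $f^\dbf(\xbf)=f^\expbf(\tbf(\xbf))$ for every $\xbf\in\Dbb^\dbf$ by starting from the right-hand side and regrouping the sum over $\Ical_\Nbf^\expbf$ according to the fibers of the index map $\sbf$. The guiding observation is that everything in sight is built coordinatewise --- the basis functions $\phi_\kbf^\expbf$, the coordinate map $\tbf$, the weight factors in \eqref{eq:13}, and the constraint imposed by $\sbf$ --- so the whole identity should collapse to a one-dimensional statement in each direction $j$, which is exactly the content of the scalar identities \eqref{eq:6m} and \eqref{eq:7}.

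Concretely, I would first substitute the definition \eqref{eq:13} of $\hat{f}^\expbf_\kbf$ into $f^\expbf(\tbf(\xbf))=\sum_{\kbf\in\Ical_\Nbf^\expbf}\hat{f}^\expbf_\kbf\,\phi_\kbf^\expbf(\tbf(\xbf))$ and collect the terms by the value $\mbf=\sbf(\kbf)\in\Ical_\Nbf^\dbf$, obtaining
\begin{align*}
	f^\expbf(\tbf(\xbf))=\sum_{\mbf\in\Ical_\Nbf^\dbf}\hat{f}^\dbf_\mbf\sum_{\substack{\kbf\in\Ical_\Nbf^\expbf\\\sbf(\kbf)=\mbf}}\left(\prod_{j=1}^d w_j(k_j)\right)\prod_{j=1}^d\phi_{k_j}^\exp(t_j(x_j)),
\end{align*}
where $w_j(k_j)$ is the $j$-th weight factor in the product in \eqref{eq:13}. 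Because the constraint $\sbf(\kbf)=\mbf$, the weights $w_j$, and the evaluated factors $\phi_{k_j}^\exp(t_j(x_j))$ all split across $j$, the inner sum factorizes as $\prod_{j=1}^d g_j(m_j)$, with $g_j(m_j)=\sum_{k_j:\,s_j(k_j)=m_j}w_j(k_j)\,\phi_{k_j}^\exp(t_j(x_j))$ and $s_j$ the $j$-th component of $\sbf$.

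The core of the argument is then to verify $g_j(m_j)=\phi_{m_j}^{d_j}(x_j)$ for every $j$. For $d_j=\exp$ this is immediate, since $s_j$ is the identity and $w_j\equiv1$. For $d_j\in\{\cos,\alg\}$ I would split on $m_j$: when $m_j=0$ the fiber $s_j^{-1}(0)$ is $\{0,-N_j/2\}$, the boundary frequency carries weight $0$ by construction, and the surviving term gives the constant $1=\phi_0^{d_j}(x_j)$; when $m_j>0$ the fiber is exactly $\{m_j,-m_j\}$, both of weight $\sqrt2/2$ and neither equal to $-N_j/2$ since $m_j<N_j/2$, so combining the two conjugate exponentials via Euler's formula reproduces $\sqrt2\cos(\pi m_j x_j)$ (for $d_j=\cos$) respectively $\sqrt2\cos(m_j\arccos(2x_j-1))$ (for $d_j=\alg$), i.e. $\phi_{m_j}^{d_j}(x_j)$. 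This is precisely \eqref{eq:6m} and \eqref{eq:7} read backwards. Multiplying the factors together yields $\prod_{j=1}^d\phi_{m_j}^{d_j}(x_j)=\phi_\mbf^\dbf(\xbf)$, and summing over $\mbf\in\Ical_\Nbf^\dbf$ recovers $f^\dbf(\xbf)$.

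The only genuinely delicate point, and the step I would treat most carefully, is the bookkeeping at the boundary frequency $k_j=-N_j/2$ in the non-exponential dimensions: the exponential frequency set $\Ical_\Nbf^\expbf$ contains this index while its cosine/Chebyshev counterpart in \eqref{eq:1} does not, so the fibers of $\sbf$ would fail to be the clean pairs $\{m_j,-m_j\}$ were it not for the weight $0$ assigned to $-N_j/2$ in \eqref{eq:13}. Confirming that $\sbf$ is a well-defined map into $\Ical_\Nbf^\dbf$, that its fibers are exactly as claimed, and that this zero weight precisely annihilates the spurious boundary contribution is what makes the otherwise routine trigonometric identities go through; the remaining work is just the tensor-product factorization and Euler's formula.
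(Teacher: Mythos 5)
Your proposal is correct and takes essentially the same route as the paper's own proof: both start from $f^{\expbf}\circ\tbf$, regroup the sum over the fibers of $\sbf$ (with the zero weight in \eqref{eq:13} annihilating the spurious boundary index $k_j=-\frac{N_j}{2}$), and then reduce everything to the one-dimensional identities \eqref{eq:6m} and \eqref{eq:7}, i.e.\ Euler's formula combined with the definitions of the cosine and Chebyshev basis functions. The only difference is presentational: you make the coordinatewise factorization explicit via the functions $g_j$, whereas the paper carries the product of case distinctions through a single chain of equalities.
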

\begin{proof}
	We point out that 
	\begin{align}
		\phi_{k_j}^{\exp}(t_j(x_j))+\phi_{-k_j}^{\exp}(t_j(x_j))=(\exp(2\pi\im k_jt_j(x_j))+\exp(-2\pi\im k_jt_j(x_j)))=2\cos(2\pi k_kt_j(x_j))\label{eq:22}
	\end{align}
	holds. Furthermore 
	\begin{align}
		\sqrt{2}\cos(2\pi k_jt_j(x_j))=\phi_{k_j}^{d_j}(x_j)\text{ for }d_j=\cos,\alg\label{eq:23}
	\end{align}
	and $\phi_{0}^{\exp}=\phi_{0}^{\cos}=\phi_{0}^{\alg}$ holds by definition.
	We have a look at the trigonometric polynomial $f^{\expbf}\circ\tbf$ at a node $\xbf\in\Dbb^{\expbf}$ and obtain
	\begin{align*}
		f^{\expbf}(\tbf(\xbf))&=\sum_{\kbf\in\Ical^{\expbf}_\Nbf}\hat{f}^{\expbf}_\kbf\phi^{\expbf}_\kbf(\tbf(\xbf))\\
		&=\sum_{\kbf\in\Ical^{\expbf}_\Nbf}\hat{f}_{\sbf(\kbf)}^\dbf\prod_{j=1}^{d}
		\begin{cases}
			1\phi_{k_j}^{\exp}(t_j(x_j)),&d_j=\exp\text{ or }k_j=0\\
			0\phi_{k_j}^{\exp}(t_j(x_j)),&d_j\neq\exp\text{ and }k_j=-\frac{N_j}{2}\\
			\frac{\sqrt{2}}{2}\phi_{k_j}^{\exp}(t_j(x_j)),&\text{else}
		\end{cases}\\
		&=\sum_{\kbf\in\Ical^{\expbf}_\Nbf}\hat{f}_{\sbf(\kbf)}^\dbf\prod_{j=1}^{d}
		\begin{cases}
			\phi_{k_j}^{\exp}(x_j),&d_j=\exp\\
			\phi_{0}^{\exp}(t_j(x_j)),&d_j\neq\exp\text{ and }k_j=0\\
			0,&d_j\neq\exp\text{ and }k_j=-\frac{N_j}{2}\\
			\frac{\sqrt{2}}{2}\phi_{k_j}^{\exp}(t_j(x_j)),&\text{else}
		\end{cases}\\
		&=\sum_{\kbf\in\Ical^{\dbf}_\Nbf}\hat{f}_{\kbf}^\dbf\prod_{j=1}^{d}
		\begin{cases}
			\phi_{k_j}^{\exp}(x_j),&d_j=\exp\\
			\phi_{0}^{\exp}(t_j(x_j)),&d_j\neq\exp\text{ and }k_j=0\\
			\frac{\sqrt{2}}{2}\phi_{k_j}^{\exp}(t_j(x_j))+\frac{\sqrt{2}}{2}\phi_{-k_j}^{\exp}(t_j(x_j)),&\text{else}
		\end{cases}\\
		&\underset{\eqref{eq:22}}{=}\sum_{\kbf\in\Ical^{\dbf}_\Nbf}\hat{f}_{\kbf}^\dbf\prod_{j=1}^{d}
		\begin{cases}
			\phi_{k_j}^{\exp}(x_j),&d_j=\exp\\
			\phi_{0}^{\exp}(t_j(x_j)),&d_j\neq\exp\text{ and }k_j=0\\
			\sqrt{2}\cos(2\pi k_jt_j(x_j)),&\text{else}
		\end{cases}\\
		&\underset{\eqref{eq:23}}{=}\sum_{\kbf\in\Ical^{\dbf}_\Nbf}\hat{f}_{\kbf}^\dbf\prod_{j=1}^{d}\phi_{k_j}^{d_j}(x_j)\\
		&=f^\dbf(\xbf).
	\end{align*}
\end{proof}
\begin{rem}
The Theorem \ref{s:1} provides us a decomposition of the non-equidistant mixed matrix $\Phibf(\Xcal,\Ical_\Nbf^\dbf)$. For this purpose we define the diagonal matrix 
\begin{align*}
	\Dbf\coloneqq\diag\left(\prod_{j=1}^{d}
	\begin{cases}
		1,&d_j=\exp\text{ or }k_j=0\\
		0,&d_j\neq\exp\text{ and }k_j=-\frac{N_j}{2}\\
		\frac{\sqrt{2}}{2},&\text{else}
	\end{cases}\right)_{\kbf\in\Ical_\Nbf^\dbf},
\end{align*}
the canonical map $\Pibf\coloneqq(\delta_{\kbf,\lbf})_{\kbf\in\Ical_\Nbf^{\dbf},\lbf\in\Ical_\Nbf^\expbf}$, the projection $\Pbf\coloneqq(\delta_{\kbf,\sbf(\kbf)})$, and the non-equidistant Fourier matrix $\Abf=(\exp(2\pi\im\skprk{\kbf}{\tbf(\xbf)}{}))_{\xbf\in\Xcal,\kbf\in\Ical_\Nbf^\expbf}$. Then the matrix transformation 
\begin{align}\label{eq:3}
    \Phibf(\Xcal,\Ical_\Nbf^\dbf)=\Abf\Pibf^\top\Pbf^\top\Dbf
\end{align}
follows directly.
\end{rem}
We summarize the procedure for the efficient evaluation of the mixed polynomials $f^\dbf$ at $M$ arbitrary nodes as non-equidistant fast mixed transform ($\nffct$) in Algorithm \ref{alg:1}.\\
\begin{algorithm}[H]
    \KwIn{Vector $\dbf\in\{\exp,\cos,\alg\}^d$, bandwidths $\Nbf\in(2\Nbb)^{d}$, coefficients $\hat{f}^\dbf_\kbf\in\Cbb$ for $\kbf\in\Ical_\Nbf^\dbf$, nodes $\Xcal\subset\Dbb^\dbf$, $\betrk{\Xcal}=M$}
    Define the coefficients $\hat{f}^{\expbf}_\kbf$ given in \eqref{eq:13} for all $\kbf\in\Ical_\Nbf^{\expbf}$.\\
    Compute
    \begin{align*}
        s(\xbf)=\sum_{\kbf\in\Ical_\Nbf^\dbf}\hat{f}^{\expbf}_\kbf\exp\left({2\pi\im\skp{\kbf}{\tilde{\xbf}}}\right)
    \end{align*}
    at the nodes $\tilde{\xbf}\in\menk{\tbf(\xbf)}{\xbf\in\Xcal}$ with $\fktk{\tbf}{\Dbb^\dbf}{\Dbb^{\expbf}}$ defined in Theorem \ref{s:1} using a $d$-variate $\nfft$\\
    \KwOut{$s(\xbf)= f^\dbf(\xbf)$ for $\xbf\in\Xcal$}
    \KwCc{$\Ocal(\betrk{\Ical_\Nbf^\dbf}\log\betrk{\Ical_\Nbf^\dbf}+\betrk{\log\epsilon}^dM)$}
    \caption{$\nffct$ for the fast evaluation of mixed polynomials $f^\dbf$ for frequency sets $\Ical_\Nbf^\dbf$ defined in \eqref{eq:1}.}\label{alg:1}
\end{algorithm}
In addition we evaluate the sum 
\begin{align}
    h(\kbf)=\sum_{\xbf\in\Xcal} g_\xbf\phi_\kbf^\dbf(\xbf),\;g_\xbf\in\Cbb\label{eq:2}
\end{align}
for all $\kbf\in\Ical^\dbf_\Nbf$. This is equivalent to the matrix vector product of the transposed non-equidistant mixed matrix $\Phibf(\Xcal,\Ical_\Nbf^\dbf)^\top$ with the vector $\gbf=(g_\xbf)_{\xbf\in\Xcal}$. We use the factorization \eqref{eq:3} of the non-equidistant mixed matrix $\Phibf(\Xcal,\Ical_\Nbf^\dbf)$ and get directly the Algorithm \ref{alg:2}, which provides a method for the fast evaluation of the sum \eqref{eq:2}.\\
\begin{algorithm}[h]
    \KwIn{Vector $\dbf\in\{\exp,\cos,\alg\}^d$, bandwidths $\Nbf\in(2\Nbb)^{d}$, nodes $\Xcal\subset\Dbb^\dbf$, $\betrk{\Xcal}=M$, coefficients $h_\xbf\in\Cbb$ for $\xbf\in\Xcal$}
    Compute
    \begin{align*}
        \tilde{h}^{\exp}(\kbf)=\sum_{\xbf\in\Xcal} h_\xbf\exp\left({2\pi\im\skp{\kbf}{\tilde{\xbf}}}\right)
    \end{align*}
    with $\kbf\in\Ical_\Nbf^\dbf$ at the nodes $\tilde{\xbf}\in\menk{\tbf(\xbf)}{\xbf\in\Xcal}$ with $\fktk{\tbf}{\Dbb^\dbf}{\Dbb^\expbf}$ defined in Theorem \ref{s:1} using a $d$-variate $\nfftt$\\
    Compute
    \begin{align*}
    \tilde{h}(\kbf)=\sum_{\substack{\lbf\in\Ical^\expbf_\Nbf\\\sbf(\lbf)=\kbf}}\tilde{h}^{\exp}(\lbf)\prod_{j=1}^{d}
	\begin{cases}
		1,&d_j=\exp\text{ or }k_j=0\\
		0,&d_j\neq\exp\text{ and }k_j=-\frac{N_j}{2}\\
		\frac{\sqrt{2}}{2},&\text{else}
	\end{cases}
    \end{align*}
	with $\sbf$ defined in \eqref{eq:15}.\\
    \KwOut{$\tilde{h}(\kbf)= h(\kbf)$, see \eqref{eq:2}, for $\kbf\in\Ical_\Nbf^\dbf$}
    \KwCc{$\Ocal(\betrk{\Ical_\Nbf^\dbf}\log\betrk{\Ical_\Nbf^\dbf}+\betrk{\log\epsilon}^dM)$}
    \caption{$\nffctt$ for the fast evaluation of sums of the form \eqref{eq:2} for frequency sets $\Ical_\Nbf^\dbf$ defined in \eqref{eq:1}.}\label{alg:2}
\end{algorithm}
\begin{rem}
	\begin{itemize}\itemsep0em
		\item We note that one can extend the non-equidistant fast mixed transformations to other orthogonal polynomials using \cite{po01}. The algorithm known as discrete polynomial transform provides a fast basis exchange for arbitrary orthogonal polynomials with satisfying a three-term recurrence into the Chebyshev basis. These Chebyshev polynomials can then be evaluated by the Algorithms \ref{alg:1} and \ref{alg:2}.
		\item One can also use other transformations, such as the transformation of the unit interval [0,1] into the real numbers $\Rbb$ from \cite{PoSc22}. This allows us to handle normally distributed nodes.
	\end{itemize}
It should be noted that these transformations can be performed in each dimension separately due to the tensor product structure of the basis and the flexibility of the mixed basis.
\end{rem}

\subsection{Grouped Mixed Transformations}\label{sec:4.2}
In this section we derive a fast algorithm for the evaluation of mixed polynomials $f^\dbf=\sum_{\kbf\in \Ical(U)}\hat{f}^\dbf_\kbf\phi_\kbf^\dbf$ where $\Ical(U)$ is a frequency set defined in \eqref{eq:10}. The theory of grouped transformations for trigonometric polynomials is well established in \cite{BaPoSchmi21}. In the following section, we summarise this idea of grouped transformations and apply it to the case of the mixed polynomials.
We denote $\xbf_\ubf\coloneqq\Pibf_\ubf\xbf$, where $\Pibf_\ubf$ is the canonical map $\Pibf_\ubf$ onto the dimensions contained in $\ubf$.\\
The evaluation of this sum at the nodes $\xbf\in\Xcal$ is equivalent to calculate the matrix vector product $\fbf^\dbf=\Phibf(\Xcal,\Ical(U))\hat{\fbf}^\dbf\in\Cbb^M$ with the matrix $\Phibf(\Xcal,\Ical(U))$, defined in \eqref{eq:16} and the vector $\hat{\fbf}^\dbf\in\Cbb^{\betr{\Ical(U)}}$. We follow the steps in \cite{BaPoSchmi21} and get
\begin{align*}
	\sum_{\kbf\in \Ical(U)}\hat{f}^\dbf_\kbf\phi_\kbf^\dbf(\xbf)=\sum_{\ubf\in U}\sum_{\kbf\in\tilde{\Ical}_{\Nbf^\ubf}^\dbf}\hat{f}^\dbf_\kbf\phi_\kbf^\dbf(\xbf)
\end{align*}
through the structure of the frequency set $\Ical(U)$. In other words, the matrix $\Phibf(\Xcal,\Ical(U))$ is a block matrix with horizontally arranged blocks $\Phibf(\Xcal,\tilde{\Ical}_{\Nbf^\ubf}^\dbf)$, $\ubf\in U$, i.e. $\Phibf(\Xcal,\Ical(U))=(\Phibf(\Xcal,\tilde{\Ical}_{\Nbf^\ubf}^\dbf)^\top)_{\ubf\in U}^\top$. Thus, we divide the task. For every $\ubf\in U$ we multiply the vector $\hat{\fbf}^{\dbf,\ubf}\coloneqq(\hat{f}^\dbf_\kbf)_{\kbf\in\tilde{\Ical}_{\Nbf^\ubf}^\dbf}$ with the block  $\Phibf(\Xcal,\tilde{\Ical}_{\Nbf^\ubf}^\dbf)$. We get for these blocks 
\begin{align*}
	\Phibf(\Xcal,\tilde{\Ical}_{\Nbf^\ubf}^\dbf)=(\phi_\kbf^{\dbf_\ubf}(\xbf_\ubf))_{\xbf\in\Xcal,\;\kbf\in \tilde{\Ical}_{\Nbf^\ubf}^{\dbf_\ubf}}=\Phibf(\men{\xbf_\ubf}{\xbf\in\Xcal},\tilde{\Ical}_{\Nbf^\ubf}^{\dbf_\ubf}).
\end{align*}
We define the vector $\hat{\gbf}^{\dbf_\ubf}=(\hat{g}^{\dbf_\ubf}_\kbf)_{\kbf\in\Ical_{\Nbf_\ubf^\ubf}^{\dbf_\ubf}}\in\Cbb^{\betrk{\Ical_{\Nbf_\ubf^\ubf}^{\dbf_\ubf}}}$, where now $\Ical_{\Nbf_\ubf^\ubf}^{\dbf_\ubf}$ is a frequency set which we can use for an $\nffct$. At this point we would like to clarify that $\Nbf^\ubf_\ubf$ are exactly the non-zero entries of $\Nbf^\ubf$. So $\Ical_{\Nbf_\ubf^\ubf}^{\dbf_\ubf}$ is a set of frequencies $\kbf$ of lower dimension as the frequencies $\tilde{\kbf}$ in $\tilde{\Ical}_{\Nbf^\ubf}^\dbf$. Specifically, $\kbf$ are the projections of $\tilde{\kbf}$ onto the dimensions contained in $\ubf$. We set each component of $\hat{\gbf}^{\dbf_\ubf}$ which is not contained in the set $\tilde{\Ical}_{\Nbf^\ubf}^{\dbf_\ubf}$ to zero, e.g.
\begin{align*}
	\hat{g}_\kbf=
	\begin{cases}
		\hat{f}_\kbf,&\betr{\supp\kbf}=\betr{\ubf}\\
		0,&\text{else}
	\end{cases},\kbf\in\Ical_{\Nbf_\ubf^\ubf}^{\dbf_\ubf}.
\end{align*}
Then we obtain
\begin{align*}
	\sum_{\kbf\in\Ical_{\Nbf_\ubf^\ubf}^{\dbf_\ubf}}\hat{g}^{\dbf_\ubf}_\kbf\phi_\kbf^{\dbf_\ubf}(\xbf)&=\sum_{\kbf\in\tilde{\Ical}_{\Nbf^\ubf}^{\dbf_\ubf}}\hat{f}^\dbf_\kbf\phi_\kbf^{\dbf_\ubf}(\xbf)+\sum_{\kbf\in\Ical_{\Nbf_\ubf^\ubf}^{\dbf_\ubf}\setminus\tilde{\Ical}_{\Nbf^\ubf}^{\dbf_\ubf}}0\cdot\phi_\kbf^{\dbf_\ubf}(\xbf)=\sum_{\kbf\in\tilde{\Ical}_{\Nbf^\ubf}^{\dbf_\ubf}}\hat{f}^\dbf_\kbf\phi_\kbf^{\dbf_\ubf}(\xbf)
\end{align*}
or in matrix vector form $\Phibf(\Xcal,\tilde{\Ical}_{\Nbf^\ubf}^{\dbf_\ubf})\hat{\fbf}^\dbf=\Phibf(\Xcal,\Ical_{\Nbf_\ubf^\ubf}^{\dbf_\ubf})\hat{\gbf}^{\dbf_\ubf}$. Which is equivalent to the matrix decomposition $\Phibf(\Xcal,\tilde{\Ical}_{\Nbf^\ubf}^{\dbf_\ubf})=\Phibf(\Xcal,\Ical_{\Nbf_\ubf^\ubf}^{\dbf_\ubf})\tilde{\Pibf}^\top$ where $\tilde{\Pibf}=(\delta_{\lbf,\kbf})_{\lbf\in\tilde{\Ical}_{\Nbf^\ubf}^{\dbf_\ubf},\kbf\in\Ical_{\Nbf_\ubf^\ubf}^{\dbf_\ubf}}\in\Rbb^{\betrk{\tilde{\Ical}_{\Nbf^\ubf}^{\dbf_\ubf}}\times\betrk{\Ical_{\Nbf_\ubf^\ubf}^{\dbf_\ubf}}}$ is a canonical map. To sum this up, we multiply the matrix $\Phibf(\Xcal,\Ical(U))$ with the vector $\hat{\fbf}^\dbf\in\Cbb^{\betr{\Ical(U)}}$ through calculating
\begin{align*}
	\Phibf(\Xcal,\Ical(U))\hat{\fbf}^\dbf&=\sum_{\ubf\in U}\Phibf(\Xcal,\Ical_{\Nbf^\ubf}^\dbf)\hat{\fbf}^{\dbf,\ubf}\\
	&=\sum_{\ubf\in U}\Phibf(\men{\xbf_\ubf}{\xbf\in\Xcal},\tilde{\Ical}_{\Nbf^\ubf}^{\dbf_\ubf})\hat{\fbf}^{\dbf,\ubf}\\
	&=\sum_{\ubf\in U}\Phibf(\men{\xbf_\ubf}{\xbf\in\Xcal},\Ical_{\Nbf_\ubf^\ubf}^{\dbf_\ubf})\tilde{\Pibf}^\top\hat{\fbf}^{\dbf,\ubf}.
\end{align*}
We calculate the last sum with $\betrk{U}$ many $\nffct$. This leads us to a computational cost of \\$\Ocal(\sum_{\ubf\in U}(\betrk{\Ical_{\Nbf_\ubf^\ubf}^{\dbf_\ubf}}\log\betrk{\Ical_{\Nbf_\ubf^\ubf}^{\dbf_\ubf}}+m_\nfft^{\betr{\ubf}}M))$. Additionally, it can be easily parallelized, because every summand can be computed independently. We summarize the this procedure in Algorithm \ref{alg:3}\\
\begin{algorithm}[H]
	\KwIn{Vector $\dbf\in\{\exp,\cos,\alg\}^d$, truncation set $U$, bandwidths $\Nbf^\ubf\in(2\Nbb)^\ubf$ for $\ubf\in U$, coefficients $\hat{f}_\kbf\in\Cbb$ for all $\kbf\in\Ical(U)$, nodes $\Xcal\subset\Dbb^\dbf$, $\betrk{\Xcal}=M$}
	$\fbf\leftarrow\nullbf$\\
	\ForEach{$\ubf\in U$\tcp*[f]{This loop can be parallelized}\\}{
		$\tilde{\Xcal}\leftarrow\men{\xbf_\ubf}{\xbf\in\Xcal}$\\
		$\hat{g}_\kbf\leftarrow
		\begin{cases}
			\hat{f}_\kbf,&\betr{\supp\kbf}=\betr{\ubf}\\
			0,&\text{else}
		\end{cases},\;\kbf\in\Ical_{\Nbf_\ubf^\ubf}^{\dbf_\ubf}$\\
		Compute $\fbf\leftarrow\fbf+\Phibf(\tilde{\Xcal},\Ical_{\Nbf_\ubf^\ubf}^{\dbf_\ubf})\gbf$ using a $\betrk{\ubf}$-variate $\nffct$\\
	}
	\KwOut{$\fbf= \Phibf(\Xcal,\Ical(U))\hat{\fbf}$}
	\KwCc{$\Ocal\left(\sum\limits_{\ubf\in U}\left(\betrk{\Ical_{\Nbf_\ubf^\ubf}^{\dbf_\ubf}}\log\betrk{\Ical_{\Nbf_\ubf^\ubf}^{\dbf_\ubf}}+m_\nfft^{\betr{\ubf}}M\right)\right)$}
	\caption{Grouped transform for the fast evaluation of mixed polynomials $f^\dbf$ with a frequency set $\Ical(U)$, see \eqref{eq:10}.}\label{alg:3}
\end{algorithm}
Furthermore, the identity 
\begin{align}
	\Phibf(\Xcal,\Ical(U))=\left(\tilde{\Pibf}\Phibf(\men{\xbf_\ubf}{\xbf\in\Xcal},\Ical_{\Nbf_\ubf^\ubf}^{\dbf_\ubf})^\top\right)^\top_{\ubf\in U}\label{eq:11}
\end{align}
holds. This \eqref{eq:11} leads us to an algorithm for multiplying with the adjoint matrix $\Phibf(\Xcal,\Ical(U))^\ast$, because
\begin{align*}
	\Phibf(\Xcal,\Ical(U))^\ast=\left(\tilde{\Pibf}\Phibf(\men{\xbf_\ubf}{\xbf\in\Xcal},\Ical_{\Nbf_\ubf^\ubf}^{\dbf_\ubf})^\ast\right)_{\ubf\in U}
\end{align*}
holds. Thus, we have a fast algorithm for the evaluation of the sum
\begin{align}
	k(\kbf)=\sum_{\xbf\in\Xcal}h_\xbf\bas{\kbf}{m}{n}(\xbf)\label{eq:18}
\end{align}
for coefficients $\hbf=(h_\xbf)_{\xbf\in\Xcal}\in\Cbb^{M}$ at the nodes $\kbf\in\Ical(U)$, which we summarize as Algorithm \ref{alg:4}.\\
\begin{algorithm}[h]
	\KwIn{Vector $\dbf\in\{\exp,\cos,\alg\}^d$, truncation set $U$, bandwidths $\Nbf^\ubf\in(2\Nbb)^\ubf$ for $\ubf\in U$, nodes $\Xcal\subset\Dbb^\dbf$, $\betrk{\Xcal}=M$, coefficients $h_\xbf\in\Cbb$ for $\xbf\in\Xcal$}
	\ForEach{$\ubf\in U$\tcp*[f]{This loop can be parallelized}\\}{
		$\tilde{\Xcal}\leftarrow\men{\xbf_\ubf}{\xbf\in\Xcal}$\\
		Compute $\gbf^\ubf\leftarrow\Phibf(\tilde{\Xcal},\Ical_{\Nbf_\ubf^\ubf}^{\dbf_\ubf})^*(h_\xbf)_{\xbf\in\Xcal}$ using a $\betrk{\ubf}$-variate $\nffctt$\\
		$\fbf^\ubf\leftarrow(g^\ubf_j)_{j\in\Ical_{\Nbf_\ubf^\ubf}^{\dbf_\ubf}}$\\
	}
	\KwOut{$(\fbf^\ubf)_{\ubf\in U}= \Phibf(\Xcal,\Ical(U))^*(h_\xbf)_{\xbf\in\Xcal}$}
	\KwCc{$\Ocal\left(\sum\limits_{\ubf\in U}\left(\betrk{\Ical_{\Nbf_\ubf^\ubf}^{\dbf_\ubf}}\log\betrk{\Ical_{\Nbf_\ubf^\ubf}^{\dbf_\ubf}}+m_\nfftt^{\betr{\ubf}}M\right)\right)$}
	\caption{Adjoint grouped transformation for the fast evaluation of the sum \eqref{eq:18} for frequency sets $\Ical(U)$ defined in \eqref{eq:10}.}\label{alg:4}
\end{algorithm}

\section{Numerical Experiments}\label{sec:5}
In this subsection, we test the ANOVA approximation with the mixed bases on synthetic and real data. In Subsection \ref{sec:5.1}, we show how the approximation procedure works and how we determine the bandwidths in this case. In Subsection \ref{sec:5.2} we compare the ANOVA approximation with the mixed bases to the ANOVA approximation with a fully periodic and a fully non-periodic basis, respectively. Furthermore, we compare analytic global sensitivity indices in Appendix \ref{c:1} to approximated ones. Furthermore, we investigate here the empirical convergence behaviour of the different approximation methods for this function. In Subsection \ref{sec:5.3} we apply the ANOVA approximation with mixed bases on a dataset from a real application. We show the improvement of the error in comparison to the ANOVA approximation with the cosine basis.\\
We have extended the ANOVAapprox framework \cite{gitNFFT3jl,gitGTjl,gitANOVAjl} with the algorithms listed in Section \ref{sec:4} and run all the following tests in this framework.\\
To determine the quality of the ANOVA approximation $\tilde{f}$ for a function $f$, we consider the mean squared error (MSE),
\begin{align*}
	\mop{MSE}(f,\tilde{f},\Xcal_\textrm{test})\coloneqq\frac{1}{\betr{\Xcal_\textrm{test}}}\sum_{\xbf\in\Xcal_\textrm{test}}\betr{f(\xbf)-\tilde{f}(\xbf)}^2,
\end{align*}
at the nodes $\Xcal_\textrm{test}$. We compute the MSE multiple times for randomly chosen training nodes $\Xcal$ and test nodes $\Xcal_\textrm{test}$. We denote how many times we compute the MSE with $N_{\textrm{MSE}}\in\Nbb$. We then average over the MSEs.
\subsection{ANOVA approximation with a mixed basis}\label{sec:5.1}
In this subsection we approximate a function using the ANOVA approximation with the mixed basis. A special focus lays in the question, how we determine the truncation set $U$ and the according bandwidths numerically. The function we are approximating in this section is
\begin{align*}
	\fkt{f_1}{[0,1]^4}{\Cbb}{\xbf}{\exp(\sin(2\pi x_1)x_2) + \cos(\pi x_3)x_4^2 + \frac{1}{10}\sin^2(2\pi x_1) + 5\sqrt{x_2x_4+1}}.
\end{align*}
This function $f_1$ is smoothly periodizable in the first dimension, i.e. $\fkt{f^{\textrm{per}}_1}{\Tbb\times[0,1]^3}{\Cbb}{\xbf}{f_1(\xbf)}$ is infinitely differentiable. Furthermore, the function acts in the third dimension only as a cosine function. This leads us to use the mixed basis $\phi_\kbf^{\dbf_1}$ with $\dbf_1\coloneqq(\exp,\alg,\cos,\alg)^\top$ for $\kbf\in\Kbb^{\dbf_1}$ for approximating the function $f_1$.\\
For this approximation we restrict us to only 1000 nodes $\Xcal$ in $\Dbb^{\dbf_1}$ distributed with the density 
\begin{align*}
	\omega^{\dbf_1}(\xbf)=\frac{1}{\pi^2\sqrt{x_2-x_2^2}\sqrt{x_4-x_4^2}}.
\end{align*}
Furthermore, we are given another 10000 nodes $\Xcal_\textrm{test}$ in $\Dbb^{\dbf_1}$ distributed with the density $\omega^{\dbf_1}$ for evaluating the mean squared error.\\
We follow the steps from Section \ref{sec:3.2.3}. The function has only one dimensional and two-dimensional interactions between variables. Thus, we set $d_s=2$ and consider the superposition set \newline$U_2=\{\{1\},\{2\},\{3\},\{4\},\{1,2\},\{1,3\},\{1,4\},\{2,3\},\{2,4\},\{3,4\}\}$ from \eqref{eq:21}. We choose one bandwidth parameter $N_1\in(2\Nbb)$ for the one-dimensional frequency sets, e.g. every non-zero entry of $\Nbf^{\{1\}}$, $\Nbf^{\{2\}}$, $\Nbf^{\{3\}}$ and $\Nbf^{\{4\}}$ is set to $N_1$. Furthermore, we choose another bandwidth parameter $N_2\in(2\Nbb)$ for the two-dimensional frequency sets, e.g. every non-zero entry of $\Nbf^{\{1,2\}}$, $\Nbf^{\{1,3\}}$, $\Nbf^{\{1,4\}}$, $\Nbf^{\{2,3\}}$, $\Nbf^{\{2,4\}}$ and $\Nbf^{\{3,4\}}$ is set to $N_2$. In short form we write this as $\Nbf^\ubf=(\betrk{\{i\}\cap\ubf}N_{\betrk{\ubf}})_{i=1}^4$ for $\ubf\in U_2$. We call the approximation of the function $f_1$ using the 1000 nodes $\Xcal$ and the bandwidth parameters $N_1$ and $N_2$ $\tilde{f}_1^{N_1,\;N_2}$. We determine the optimal bandwidth parameters $N_1$ and $N_2$ numerically by minimizing the mean squared error $\mop{MSE}(f_1,\tilde{f}_1^{N_1,\;N_2},\Xcal_\textrm{test})$, i.e.
\begin{align}
	(N_i)_{i=1}^2=\argmin_{(N_i)_{i=1}^2\in(2\Nbb)^2}\mop{MSE}(f_1,\tilde{f}_1^{N_1,N_2},\Xcal_\textrm{test})\label{eq:29}.
\end{align}
In other words, we use cross validation to determine the bandwidth parameters $N_1$ and $N_2$.

\begin{figure}
	\begin{minipage}[b]{.4\linewidth}
		\begin{tikzpicture}
		\begin{axis}[
            view={0}{90},   
            xlabel=$N_2$,
            ylabel=$N_1$,
            colorbar,
			width=7cm,
			height=6cm,
            enlargelimits=false,
            axis on top,
			colorbar style={ytick={-7,-5,...,-1},yticklabel={$10^{\pgfmathparse{\tick}\pgfmathprintnumber\pgfmathresult}$}},
			y label style={at={(axis description cs:0.03,0.5)},anchor=north},
        ]
            \addplot [matrix plot*,point meta=explicit] file {data3.dat};
        \end{axis}
	  \end{tikzpicture}
	  \caption{Mean squared errors for $U_2$ and bandwidths $\Nbf^\ubf=(\betrk{\{i\}\cap\ubf}N_{\betrk{\ubf}})_{i=1}^4$ and $N_1$ and $N_2$ for $f_1$.}\label{dg:1}
	\end{minipage}
	\hspace{.1\linewidth}
	\begin{minipage}[b]{.5\linewidth}
		\begin{tikzpicture}
	\begin{semilogyaxis}[name=boundary,ymin=0,ymax=1,axis y line=left,axis x line*=bottom,width=7cm,height=6cm,symbolic x coords/.expand once={$\{1\}$,$\{2\}$,$\{3\}$,$\{4\}$,$\{1;2\}$,$\{1;3\}$,$\{1;4\}$,$\{2;3\}$,$\{2;4\}$,$\{3;4\}$},xtick=data,x tick label style={rotate=90,anchor=east},grid=major,xticklabel style = {yshift=-7},]
	\addplot+[ycomb,mark=triangle*,shift={(50,0)}, color=violet, solid,mark options={fill=violet}] table [x=a, y=b, col sep=space] {data4.dat};\label{pl:5.1}

	\end{semilogyaxis}
	\end{tikzpicture}
	\caption{Approximated global sensitivity indices for $U_2$ and bandwidths $\Nbf^\ubf=(\betrk{\{i\}\cap\ubf}N_{\betrk{\ubf}})_{i=1}^4$ and $N_1=12$ and $N_2=10$ for $f_1$.}\label{dg:2}
\end{minipage}
\end{figure}

We see in Figure \ref{dg:1} the MSE for some choices of $N_1$ and $N_2$. We vary the parameter $N_1$ from 2 to 50 and the parameter $N_2$ from 2 to 12. The colour of each block corresponds to the MSE of the approximation with the corresponding parameter set. We obtain values for the MSE in the range $[10^{-7.7133},10^{-0.4968}]$. The minimum is obtained with the parameter set $N_1=12$ and $N_2=10$. In Figure \ref{dg:2} the approximated GSIs for the approximation with these parameters are shown. We show these GSIs on a logarithmic scale because they are of quite different magnitudes. To this end we choose the threshold $\theta=10^{-2}$ and find through \eqref{eq:25} the truncation set $U_\theta=\{\{1\},\{2\},\{3\},\{4\},\{1,2\},\{2,4\},\{3,4\}\}$. Next, we find better bandwidths $\Nbf^\ubf$ for $U_\theta$. To do this we introduce a new set of bandwidth parameters $N_{\ubf}\in(2\Nbb)$ for $\ubf\in U_\theta$, i.e. one parameter for every bandwidth. We get the bandwidths $\Nbf^\ubf$ by setting every non-zero entry to $N_{\ubf}$, i.e. $\Nbf^\ubf=(\betrk{\{i\}\cap\ubf}N_{\ubf})_{i=1}^4$. We optimise these bandwidth parameters one by one, starting with the parameters corresponding to the two-dimensional bandwidths. We do this through increasing the parameter firstly bigger until the MSE gets bigger. If the MSE gets bigger in the first step, we decrease the parameter until the MSE gets bigger. Then we use the parameter which has generated the minimal MSE. As a starting point we use the bandwidths $\Nbf^\ubf=(\betr{\{i\}\cap \ubf}N_{\betr{\ubf}})_{i=1}^4$ generated by the optimal parameters $N_1=12$ and $N_2=10$ of the previous approximation step. In Table \ref{tb:1} we show the parameters we tried to find the optimal ones.

\begin{table}[!h]
	\small
	\centering
	\begin{tabular}[h]{|R{2em}||R{2.5em}|R{2.5em}|R{2.5em}|R{2.5em}|R{2.5em}|R{2.5em}|R{2.5em}||R{6.5em}|}
	\hline
	Step&$N_{\{1\}}$&$N_{\{2\}}$&$N_{\{3\}}$&$N_{\{4\}}$&$N_{\{1,2\}}$&$N_{\{2,4\}}$&$N_{\{3,4\}}$&MSE\\
	\hline\hline
	1&12&12&12&12&10&10&10&$1.31369\cdot10^{-8}$\\\hline
	2&12&12&12&12&10&10&12&$1.36285\cdot10^{-8}$\\\hline
	3&12&12&12&12&10&10&8&$1.25704\cdot10^{-8}$\\\hline
	4&12&12&12&12&10&10&6&$1.2035\cdot10^{-8}$\\\hline
	5&12&12&12&12&10&10&4&$3.97122\cdot10^{-8}$\\\hline
	6&12&12&12&12&10&12&6&$1.24734\cdot10^{-8}$\\\hline
	7&12&12&12&12&10&8&6&$1.15079\cdot10^{-8}$\\\hline
	8&12&12&12&12&10&6&6&$1.13142\cdot10^{-8}$\\\hline
	9&12&12&12&12&10&4&6&$1.10034\cdot10^{-8}$\\\hline
	10&12&12&12&12&10&2&6&$4.68726\cdot10^{-3}$\\\hline
	11&12&12&12&12&12&4&6&$1.20275\cdot10^{-10}$\\\hline
	12&12&12&12&12&14&4&6&$4.27822\cdot10^{-11}$\\\hline
	13&12&12&12&12&16&4&6&$5.42373\cdot10^{-11}$\\\hline
	14&12&12&12&14&14&4&6&$4.28631\cdot10^{-11}$\\\hline
	15&12&12&12&10&14&4&6&$4.2808\cdot10^{-11}$\\\hline
	16&12&12&14&12&14&4&6&$4.29279\cdot10^{-11}$\\\hline
	17&12&12&10&12&14&4&6&$4.26967\cdot10^{-11}$\\\hline
	18&12&12&8&12&14&4&6&$4.25943\cdot10^{-11}$\\\hline
	19&12&12&6&12&14&4&6&$4.24758\cdot10^{-11}$\\\hline
	20&12&12&4&12&14&4&6&$4.25071\cdot10^{-11}$\\\hline
	21&12&14&6&12&14&4&6&$4.24995\cdot10^{-11}$\\\hline
	22&12&10&6&12&14&4&6&$4.23524\cdot10^{-11}$\\\hline
	23&12&8&6&12&14&4&6&$4.17446\cdot10^{-11}$\\\hline
	24&12&6&6&12&14&4&6&$4.35334\cdot10^{-11}$\\\hline
	25&14&8&6&12&14&4&6&$6.80995\cdot10^{-12}$\\\hline
	26&16&8&6&12&14&4&6&$6.58355\cdot10^{-12}$\\\hline
	27&18&8&6&12&14&4&6&$6.5848\cdot10^{-12}$\\\hline
	\end{tabular}
	\caption{Mean squared errors for $U_\theta$ and bandwidths $\Nbf^\ubf=(\betr{\{i\}\cap \ubf}N_{\betr{\ubf}})_{i=1}^4$ for some choices of the parameters $N_1$ and $N_2$.}
	\label{tb:1}
\end{table}
We get the bandwidths $\Nbf^\ubf=(\betrk{\{i\}\cap\ubf}N_{\ubf})_{i=1}^4$ with the parameters $N_{\{1\}}=16$, $N_{\{2\}}=8$, $N_{\{3\}}=6$, $N_{\{4\}}=12$, $N_{\{1,2\}}=14$, $N_{\{2,4\}}=4$, and $N_{\{3,4\}}=6$. Finally, we repeat the one by one optimizing procedure again with all parameters for the bandwidths, e.g. we consider every non-zero entry of each bandwidth as one parameter. As result, we get the bandwidths
\begin{align*}
	&\Nbf^{\{1\}}=
	\begin{psmallmatrix}
		16\\0\\0\\0
	\end{psmallmatrix},&&\;
	\Nbf^{\{2\}}=
	\begin{psmallmatrix}
		0\\8\\0\\0
	\end{psmallmatrix},&&\;
	\Nbf^{\{3\}}=
	\begin{psmallmatrix}
		0\\0\\2\\0
	\end{psmallmatrix},&&\;
	\Nbf^{\{4\}}=
	\begin{psmallmatrix}
		0\\0\\0\\10
	\end{psmallmatrix},&&\\
	&\Nbf^{\{1,2\}}=
	\begin{psmallmatrix}
		16\\8\\0\\0
	\end{psmallmatrix},&&\;
	\Nbf^{\{2,3\}}=
	\begin{psmallmatrix}
		0\\2\\4\\0
	\end{psmallmatrix},\text{ and }&&
	\Nbf^{\{2,4\}}=
	\begin{psmallmatrix}
		0\\8\\0\\8
	\end{psmallmatrix}&&&&
\end{align*}
with a mean squared error of $9.74704\cdot10^{-14}$. We repeat the procedure $N_{\text{MSE}}=100$ times with new randomly distributed data points to account for the variance. We show the resulting MSEs in Figure \ref{dg:7}.
\begin{figure}
	\center
	\begin{tikzpicture}
	\begin{axis}[width=\linewidth,
	boxplot/draw direction=y,
	axis x line*=bottom,
	axis y line*=left,
	xtick={1},
	xticklabels={Mixed basis},
	ylabel={${\textrm{MSE}}$},
	boxplotcolor/.style={color=#1,fill=#1!70,mark options={color=#1,fill=#1!70}},width=3.75cm,height=7.5cm
	]
	\addplot+[boxplotcolor=black,boxplot prepared={
		lower whisker=9.742839857268769e-14, lower quartile=1.5834737850918943e-13,
		median=2.4547414100803726e-13, upper quartile=3.800961670899178e-13,
		upper whisker=7.127193499610104e-13}]
	coordinates {(0, 7.237988512143848e-13)};
	\end{axis}
	\begin{axis}[width=\linewidth,at={(1700,0)},
	boxplot/draw direction=y,
	axis x line*=bottom,
	axis y line*=left,
	xtick={1},
	xticklabels={Cosine basis},
	ylabel={${\textrm{MSE}}$},
	boxplotcolor/.style={color=#1,fill=#1!70,mark options={color=#1,fill=#1!70}},width=3.75cm,height=7.5cm
	]
	\addplot+[boxplotcolor=blue,mark=square*,boxplot prepared={
		lower whisker=3.4782162479853885e-5, lower quartile=4.198560716357893e-5,
		median=4.681238643468041e-5, upper quartile=5.229767340715227e-5,
		upper whisker=6.776577277251228e-5}]
	coordinates {(0, 7.603123293866874e-5)(0, 7.782806544739678e-5)};
	\end{axis}
	\end{tikzpicture}
	\caption{Boxplot of the MSEs for the approximation of $f_1$ with mixed and cosine basis.}\label{dg:7}
\end{figure}
We use standard box plots in this figure, i.e. the lower and upper boundaries of the box represent the first and third quantiles and the whiskers have a maximum length of 1.5 times the interquantile range. We point out that median of the MSEs is $2.45474\cdot10^{-14}$. All in all, we approximate the function $f_1$ with a sum of 365 basis functions combined with the same number of coefficients. Next, we approximate the function with the cosine basis. We do this the same way as it is described above for the mixed basis. We end up with the same truncation set $U_\theta$. Again we show the MSEs for $N_{\text{MSE}}=100$ runs in Figure \ref{dg:7}. We obtain an median of $4.68123\cdot10^{-5}$ which is significantly worse in comparison to the mixed basis. It should be noted that the approximation with the mixed basis is orders of magnitude better than the approximation with the cosine basis.

\subsection{Comparison of analytic and approximated global sensitivity indices}\label{sec:5.2}
In this subsection we approximate a function multiple times with different numbers of nodes $M$. This time we restrict ourselves to uniformly sampled nodes. This has the advantage that we can compare the ANOVA approximation with the mixed basis to the Fourier basis and with the half period cosine basis approximation. We also compare the approximated GSIs with analytically calculated ones. In order to do this, we consider the function
\begin{align*}
\fkt{f_2}{[0,1]^4}{\Cbb}{x_1,x_2,x_3,x_4}{(2x_1-1)^2 x_3+10\sin(2\pi x_1)\left(x_2-\frac{1}{2}\right)^2+\exp(x_3)}.
\end{align*}
The function $f_2$ does not depend on the variable $x_4$. Furthermore, the function has the same values at the boundaries in dimension one and two, e.g.
\begin{align*}
f_2(0,x_2,x_3,x_4) &= f_2(1,x_2,x_3,x_4),\;\forall x_2,x_3,x_4\in[0,1]\andt\\
f_2(x_1,0,x_3,x_4) &= f_2(x_1,1,x_3,x_4),\;\forall x_1,x_3,x_4\in[0,1].
\end{align*}
Thus, we should use the Fourier basis for the first two coordinates and the half period cosine basis for the third, i.e. $\dbf_2\coloneqq(\exp,\exp,\cos,\cos)^\top$. Furthermore, we test two more ANOVA approximations without mixed bases, namely one with a Fourier basis and one with a half period cosine basis. In the appendix \ref{c:1} we calculate the analytic GSIs of this function $f_2$. The results are
\begin{alignat*}{3}
&\rho(\{1\},f_2)&&=\frac{133}{59+600\e-180\e^2}&&\approx0.369507,\\
&\rho(\{3\},f_2)&&=\frac{-530+1800\e-540\e^2}{177+1800\e-540\e^2}&&\approx0.345259,\\
&\rho(\{1,2\},f_2)&&=\frac{100}{59+600\e-180\e^2}&&\approx0.277825,\\
&\rho(\{1,3\},f_2)&&=\frac{8}{177+1800\e-540\e^2}&&\approx0.007409,
\end{alignat*}
and the other analytic GSIs are zero.\\
We now compare this with the approximated GSIs. For ANOVA approximation we use $M=50,100,200,500,1000,2000,5000,10000,20000$ and $50000$ uniformly distributed nodes $\Xcal$. For this function $f_2$ we consider the superposition set $U_{d_s}$ with $d_s=2$, since the function $f_2$ has only one-dimensional and two dimensional interactions between variables. In this example we restrict ourselves to two bandwidth parameters, $N_1$ for one-dimensional bandwidths and $N_2$ for the two-dimensional bandwidths. For $M\leq10000$ we determine the bandwidth parameters $N_1$ and $N_2$ numerically like in \eqref{eq:29}. The results are shown in the Table \ref{tb:3}.
\begin{table}
	\small
	\centering
	\begin{tabular}[h]{|R{3em}||R{2.5em}|R{2.5em}||R{2.5em}|R{2.5em}||R{2.5em}|R{2.5em}|}
	\hline
	\multirow{2}{*}{$M$}&\multicolumn{2}{c||}{$\cosbf$}&\multicolumn{2}{c||}{$\dbf_2$}&\multicolumn{2}{c|}{$\expbf$}\\
	&$N_1$&$N_2$&$N_1$&$N_2$&$N_1$&$N_2$\\
	\hline
	50&4&2&4&2&4&2\\
	\hline
	100&4&4&4&4&4&4\\
	\hline
	200&6&4&6&4&14&4\\
	\hline
	500&12&8&10&8&32&6\\
	\hline
	1000&18&10&14&10&76&6\\
	\hline
	2000&28&14&24&14&150&10\\
	\hline
	5000&56&22&40&22&300&14\\
	\hline
	10000&70&32&60&32&720&18\\
	\hline\hline
	20000&170&46&110&46&1962&26\\
	\hline
	50000&382&76&224&76&6548&40\\
	\hline
	\end{tabular}
	\caption{Optimal bandwidths (for $M\leq10000$) $\Nbf^\ubf=(\betr{\{i\}\cap \ubf}N_{\betr{\ubf}})_{i=1}^4$ for $U_2$ for $f_2$ approximated at $M$ training nodes and (for $M>10000$) extrapolated bandwidths.}
	\label{tb:3}
\end{table}
We obtain the bandwidths for $M>10000$ by extrapolating the previously determined optimal bandwidths.\\
In Figure \ref{dg:3} we plot the resulting mean squared errors. Here we notice that the error for the approximation with the Fourier basis decays with the rate $M^{-1}\log(M)$. The error of the approximation with the mixed basis and the half period cosine basis decays with the rate $M^{-\frac{3}{2}}\log(m)^{-\frac{3}{2}}$, while the approximation with the mixed basis gives a better constant.\\

In the next part we show that the observed decay rate for the approximation with the mixed basis is optimal. The mixed coefficients of the two-dimensional ANOVA term $f_{\{1,3\}}$ decay quadratically in both directions, i.e. $\betrk{c_\kbf^{\dbf_2}(f_{\{1,3\}})}\leq Ck_1^{-2}k_2^{-2},\;C>0$. For the calculation of these mixed coefficients, see Appendix \ref{c:1}. We consider the error of the projection of the ANOVA term $f_{\{1,3\}}$ onto the set of polynomials $\Tcal_{\Ical_\Nbf^{\dbf_3}}(\Bcal^{\dbf_3})$ with $\Nbf\coloneqq(N,N)^\top$ and $\dbf_3\coloneqq(\exp,\cos)^\top$. We obtain
\begin{align*}
	\norm{f_{\{1,3\}}-P_{\Tcal_{\Ical_\Nbf^{\dbf_3}}(\Bcal^{\dbf_3})}f_{\{1,3\}}}^2_{\Lp{2}(\Dbb^{\dbf_3},\omega^{\dbf_3})}&=\sum_{\kbf\notin\Ical_\Nbf^{\dbf_3}}\betrk{c_\kbf^{\dbf_2}(f_{\{1,3\}})}^2\\
	&\leq C\sum_{\kbf\notin\Ical_\Nbf^{\dbf_3}}(k_1^{-2}k_2^{-2})^2\\
	&={}C\sum_{k_1=1}^N\sum_{k_2=N+1}^\infty(k_1^{-2}k_2^{-2})^2+C\sum_{k_1=N+1}^\infty\sum_{k_2=N+1}^\infty(k_1^{-2}k_2^{-2})^2\\
	&={}C\left(\sum_{k_1=1}^Nk_1^{-4}\right)\left(\sum_{k_2=N+1}^\infty k_2^{-4}\right)+C\left(\sum_{k=N+1}^\infty k^{-4}\right)\\
	&={}C_1 N^{-3}+C_2N^{-6}\\
\end{align*}
with constants $C_1>0$ and $C_2>0$. So the projection error is in $\Ocal(N^{-3})$. Since we need logarithmic oversampling, see \cite{Ba23}, we have $N^2\in\Ocal(M\log(M)^{-1})$ such that we get a projection error of $\Ocal(M^{-\frac{3}{2}}\log(M)^{\frac{3}{2}})$. This is an lower bound to the error of the approximation $\normk{f-\tilde{f}}^2_{\Lp{2}(\Dbb^{\dbf_2},\omega^{\dbf_2})}$. Similar calculations show that the decay rates for the errors of the projections of the other ANOVA terms are faster. Furthermore, the MSE approximates the error $\normk{f-\tilde{f}}^2_{\Lp{2}(\Dbb^{\dbf_2},\omega^{\dbf_2})}$ when the test set $\Xcal_\textrm{test}$ is distributed in $\Dbb^{\dbf_2}$ with the density $\omega^{\dbf_2}$. In short, we see the expected decay rate of the MSE at the approximation with the mixed basis. For the approximation with the cosine basis hold similar calculations, since the cosine coefficients of the function $c_\kbf^{\cosbf}(f_2)$ decay in a similar way. So we get the same decay rate for the approximation with the cosine basis. The rate of the approximation with the Fourier basis is slower because of the Fourier coefficients of the function $c_\kbf^{\expbf}(f_2)$ have lower decay rate, e.g. $\betrk{c_k^{\exp}(f_{\{3\}})}=Ck^{-1}$, $C>0$. So we cannot expect a decay rate better than $M^{-1}\log(M)$.\\
Next, we use the approximation again with the cosine basis to compare it with the mixed basis. 
\begin{figure}
	\begin{minipage}[b]{.48\linewidth}
		\centering
	\begin{tikzpicture}
	\begin{axis}[name=boundary,axis x line=bottom,axis y line=left,ymin=0.000004,xmax=0,xmode=log,ymode=log,width=7.5cm,height=7.5cm,grid=minor,x label style={at={(axis description cs:0.5,0)},anchor=north},y label style={at={(axis description cs:0.04,0.5)},anchor=south},xlabel={$M$}, ylabel={$\mop{MSE}(f_2,\tilde{f}_2,\Xcal_\textrm{test})$}]
	\addplot[dash dot,mark options={fill=black,solid},mark=triangle*, color=black] table [x=a, y=b, col sep=space] {data1.dat};\label{pl:3.1}
	\addplot[dashed,mark options={fill=violet,solid}, mark=diamond*, color=violet] table [x=a, y=h, col sep=space] {data1.dat};\label{pl:3.2}
	\addplot[dash dot dot,mark options={fill=blue,scale=0.7,solid}, mark=*, color=blue] table [x=a, y=d, col sep=space] {data1.dat};\label{pl:3.3}
	\addplot[domain = 50:50000, color = violet] {7/x^1.66*ln(x)^1.66};\label{pl:3.4}
	\addplot[domain = 50:50000, color = blue] {6/x^1.1*ln(x)^1.1};\label{pl:3.5}
	\end{axis}
	\node[draw,fill=white,inner sep=0pt] at (4.2,5.5) {\small
		\begin{tabular}{cl}
		\ref{pl:3.1} & {$\cosbf$}\\
		\ref{pl:3.2} & {$\dbf_2$}\\
		\ref{pl:3.3} & {$\expbf$}\\
		\ref{pl:3.4} & {$7M^{-\frac{3}{2}}\log(M)^{-\frac{3}{2}}$}\\
		\ref{pl:3.5} & {$6M^{-1}\log(M)$}\\
		\end{tabular}};
	\end{tikzpicture}
	\caption{Mean squared errors of the ANOVA approximation $\tilde{f}_2$ to $f_2$ for the bandwidths from Table \ref{tb:3} and $M$ training nodes, evaluated at 10000 nodes.}\label{dg:3}
	\end{minipage}
	\begin{minipage}[b]{.48\linewidth}
		\centering
		\begin{tikzpicture}
		\begin{axis}[name=boundary,axis x line=bottom,axis y line=left,xmode=log,ymode=log,width=7.5cm,height=7.5cm,grid=minor,x label style={at={(axis description cs:0.5,0)},anchor=north},y label style={at={(axis description cs:0.03,0.5)},anchor=south},xlabel={$M$}, ylabel={$\left(\sum_{\ubf\in U}(\rho(\ubf,f_2)-\rho(\ubf,\tilde{f}_2))^2\right)^\frac{1}{2}$}]
		\addplot[dash dot,mark options={fill=black,solid}, mark=triangle*, color=black] table [x=a, y=e, col sep=space] {data1.dat};\label{pl:4.1}
		\addplot[dash dot dot,mark options={fill=violet,solid}, mark=diamond*, color=violet] table [x=a, y=f, col sep=space] {data1.dat};\label{pl:4.2}
		\addplot[dashed,mark options={fill=blue,scale=0.7,solid}, mark=*, color=blue] table [x=a, y=g, col sep=space] {data1.dat};\label{pl:4.3}
		\end{axis}
		\node[draw,fill=white,inner sep=0pt] at (4.87,5.3) {\small
			\begin{tabular}{cl}
			\ref{pl:4.1} & {$\cosbf$}\\
			\ref{pl:4.2} & {$\dbf_2$}\\
			\ref{pl:4.3} & {$\expbf$}\\
			\end{tabular}};
		\end{tikzpicture}
		\caption{Deviation of approximated global sensitivity indices for bandwidths from Table \ref{tb:3} and $M$ training nodes to the analytic GSIs.}\label{dg:4}
\end{minipage}
\end{figure}
In Figure \ref{dg:4}, we compare the approximated GSIs with the analytic GSIs and see that they converge. The approximated GSIs using the Fourier basis converge slower than the approximated GSIs using the mixed basis. In Figure \ref{dg:5} we consider the individual approximated GSIs for different numbers of training nodes. Here we notice that the approximated GSIs using the Fourier basis performs particularly poorly in the dimensions where the function $f_2$ is not continuously periodizable, e.g. for $\ubf=\{3\}$ we have particularly large deviations from the analytic GSI. Furthermore, we see for example at $\ubf=\{1,2\}$ that the approximated GSIs using the half period cosine basis converge more slowly towards the analytic GSI than approximated GSIs using the Fourier basis. The approximated GSIs using the mixed basis combines the positive properties of the other two ANOVA approximations and therefore converges much faster.
\begin{figure}
	\centering
	\begin{tikzpicture}
	\begin{axis}[name=boundary,ymin=0,ymax=0.5,axis y line=left,axis x line*=bottom,width=17cm,height=8cm,symbolic x coords/.expand once={$\{1\}$,$\{2\}$,$\{3\}$,$\{4\}$,$\{1;2\}$,$\{1;3\}$,$\{1;4\}$,$\{2;3\}$,$\{2;4\}$,$\{3;4\}$},xtick=data,x tick label style={rotate=90,anchor=east},grid=major,xticklabel style = {yshift=-20.5},]
	\addplot+[ycomb,mark=triangle*,shift={(7.7,0)}, color=black, dashdotted,mark options={fill=black,solid}] table [x=a, y=j, col sep=space] {data2.dat};\label{pl:5.10}
	\addplot+[ycomb,mark=diamond*,shift={(15.4,0)}, color=black, dashdotted,mark options={fill=black,solid}] table [x=a, y=k, col sep=space] {data2.dat};\label{pl:5.11}
	\addplot+[ycomb,mark=*,shift={(23.1,0)}, color=black, dashdotted,mark options={fill=black,scale=0.7,solid}] table [x=a, y=l, col sep=space] {data2.dat};\label{pl:5.12}
	\addplot+[ycomb,mark=star,shift={(30.8,0)}, color=black, dashdotted] table [x=a, y=m, col sep=space] {data2.dat};\label{pl:5.13}
	\addplot+[ycomb,mark=triangle*,shift={(38.5,0)}, color=violet, solid,mark options={fill=violet}] table [x=a, y=b, col sep=space] {data2.dat};\label{pl:5.1}
	\addplot+[ycomb,mark=diamond*,shift={(46.2,0)}, color=violet, solid,mark options={fill=violet}] table [x=a, y=d, col sep=space] {data2.dat};\label{pl:5.3}
	\addplot+[ycomb,mark=*,shift={(53.9,0)}, color=violet, solid,mark options={fill=violet,scale=0.7}] table [x=a, y=f, col sep=space] {data2.dat};\label{pl:5.5}
	\addplot+[ycomb,mark=star,shift={(61.6,0)}, color=violet, solid] table [x=a, y=h, col sep=space] {data2.dat};\label{pl:5.7}
	\addplot+[ycomb,mark=triangle*, shift={(69.3,0)}, color=blue, dashed,mark options={fill=blue,solid}] table [x=a, y=c, col sep=space] {data2.dat};\label{pl:5.2}
	\addplot+[ycomb,mark=diamond*, shift={(77,0)}, color=blue, dashed,mark options={fill=blue,solid}] table [x=a, y=e, col sep=space] {data2.dat};\label{pl:5.4}
	\addplot+[ycomb,mark=*, shift={(84.7,0)}, color=blue, dashed,mark options={fill=blue,scale=0.7,solid}] table [x=a, y=g, col sep=space] {data2.dat};\label{pl:5.6}
	\addplot+[ycomb,mark=star, shift={(92.4,0)}, color=blue, dashed] table [x=a, y=i, col sep=space] {data2.dat};\label{pl:5.8}
	\addplot[red,sharp plot,update limits=false,xscale=1,shift={(0,0)}] coordinates {($\{1\}$,0.369507) ($\{2\}$,0.369507)};\label{pl:5.9}
	\addplot[red,sharp plot,update limits=false,xscale=1,shift={(0,0)},very thick] coordinates {($\{2\}$,0) ($\{3\}$,0)};
	\addplot[red,sharp plot,update limits=false,xscale=1,shift={(0,0)}] coordinates {($\{3\}$,0.345259) ($\{4\}$,0.345259)};
	\addplot[red,sharp plot,update limits=false,xscale=1,shift={(0,0)},very thick] coordinates {($\{4\}$,0) ($\{1;2\}$,0)};
	\addplot[red,sharp plot,update limits=false,xscale=1,shift={(0,0)}] coordinates {($\{1;2\}$,0.277825) ($\{1;3\}$,0.277825)};
	\addplot[red,sharp plot,update limits=false,xscale=1,shift={(0,0)}] coordinates {($\{1;3\}$,0.007409) ($\{1;4\}$,0.007409)};
	\addplot[red,sharp plot,update limits=false,xscale=1.4,shift={(-171.5,0)},very thick] coordinates {($\{1;4\}$,0) ($\{3;4\}$,0)};
	\end{axis}\node[draw,fill=white,inner sep=0pt] at (12,3.5) {\small
	\begin{tabular}{cll}
		\ref{pl:5.10} & \multirow{4}{*}{$\cosbf$}&$M=50$\\
		\ref{pl:5.11} & &$M=100$\\
		\ref{pl:5.12} & &$M=200$\\
		\ref{pl:5.13} & &$M=500$\\
		\hline
		\ref{pl:5.1} & \multirow{4}{*}{$(\exp,\exp,\cos,\cos)^\top$}&$M=50$\\
		\ref{pl:5.3} & &$M=100$\\
		\ref{pl:5.5} & &$M=200$\\
		\ref{pl:5.7} & &$M=500$\\
		\hline
		\ref{pl:5.2} & \multirow{4}{*}{$\expbf$}&$M=50$ \\
		\ref{pl:5.4} & &$M=100$ \\
		\ref{pl:5.6} & &$M=200$ \\
		\ref{pl:5.8} & &$M=500$ \\
		\hline
		\ref{pl:5.9}&\multicolumn{2}{l}{Analytic GSIs}\\
		\end{tabular}};
	\end{tikzpicture}
	\caption{Global sensitivity indices for $U_2$ and bandwidths from Table \ref{tb:3} for $f_2$ trained on $M$ randomly chosen nodes.}\label{dg:5}
\end{figure}
\subsection{Numerical Experiment with Real Data}\label{sec:5.3}
In this subsection we use the airfoil self-noise dataset. This dataset was used in \cite{PoSc21}. In that paper it was approximated using the ANOVA approximation with the cosine basis and the results were compared with results from the literature using different machine learning methods. It was found that the ANOVA approximation gave the best results, and it is interpretable. This dataset is about NACA airfoils tested in wind tunnels with different wind speeds and angles of attack tested by the NASA. The goal is to predict the scaled sound pressure level of the self-noise in decibels, see \cite{UCIr}. The dataset consists of 1503 nodes, each with 5 attributes. We split the data in a training set $\Xcal$ of 80\% and a test set $\Xcal_{\text{test}}$ of 20\% like it is done in \cite{PoSc21} such that we can compare the results. Furthermore, we normalize the nodes into $[0,1]$.\\
We use the same truncation set as \cite{PoSc21}, namely $U=\{\{1\},\{2\},\{3\},\{4\},\{5\},\{1,2\},\{1,3\},\{1,4\},\{1,5\},$ $\{2,3\},\{2,5\},\{3,4\},\{3,5\},\{4,5\}\}$. In each dimension we test the three different bases and choose the one that gives the smallest error. As result we get the Fourier basis in the first two dimensions, the Chebyshev basis in the third and fourth dimension and the cosine basis in the last dimension. This is interpretable, i.e. it shows that the first two dimensions have a more periodic behaviour than the last dimension, but further interpretation requires more knowledge of the physical background and the technical details of the experiment. The chosen Chebyshev basis in the third and fourth dimensions suggests that the density of the sampled points is closer to the Chebyshev density than to the uniform density. We use bandwidths of the form $\Nbf^\ubf=(\betr{\{i\}\cap \ubf}N_{\betr{\ubf}})_{i=1}^5$ with bandwidth parameters $N_{\ubf}$ for $\ubf\in U$. We choose these bandwidth parameters $N_{\ubf}$ for $\ubf\in U$ through cross validation. To this end, we take 100 random data splits and obtain models for these training sets $\Xcal$. Next, we determine the error in the models using the corresponding test sets $\Xcal_{\text{test}}$. We get 3.72 as median of the relative errors with the model with the mixed basis. In comparison, we get 4.21 as median of the relative errors with the model as in \cite{PoSc21}. This yields an improvement of 11.6\%. An boxplot of the results is shown in Figure \ref{dg:6}.
\begin{figure}
	\center
		\begin{tikzpicture}
		\begin{axis}[width=\linewidth,
		boxplot/draw direction=y,
		axis x line*=bottom,
		axis y line*=left,
		xtick={1,2},
		xticklabels={Mixed basis,Cosine basis},
		ylabel={MSE},
		boxplotcolor/.style={color=#1,fill=#1!70,mark options={color=#1,fill=#1!70}},width=7.5cm,height=7.5cm
		]
		\addplot+[boxplotcolor=black,boxplot prepared={
			lower whisker=2.4385422949328275, lower quartile=3.4522976794538005,
			median=3.72095160035243, upper quartile=4.133838070568107,
			upper whisker=4.743262187023889}]
		coordinates {(0,4.792124772899251)(0,5.2457573508029665)(0,4.817773505905978)(0,5.046917755744663)(0,5.353193940116853)};
		\addplot+[boxplotcolor=blue,boxplot prepared={
			lower whisker=3.3544788092313857, lower quartile=3.780207117306852,
			median=4.211072093694923, upper quartile=4.752459942110381,
			upper whisker=5.669451330900216}]
		coordinates {(0,6.9499173015921185)(0,5.835736412343023)(0,5.70059478509507)(0,5.791393500891101)};
		\end{axis}
		\end{tikzpicture}
		\caption{Boxplot of the relative errors of the models for the airfoil self-noise dataset with mixed and cosine basis.}\label{dg:6}
\end{figure}

\begin{figure}
	
\end{figure}
\markboth{Bibliography}{Bibliography}
\printbibliography
\appendix
\section*{Acknowledgement}
This work is dedicated to the 70th birthday of Albrecht Böttcher. It has
been a great honer for DP to work with Albrecht. He would especially
like to mention the joint work on publications, as well as the joint
organization of many summer schools in Chemnitz. PS got to know Albrecht
Böttcher through outstanding lectures in linear algebra and functional
analysis.  With his lively, precise lectures, Albrecht not only
influenced several generations of young students but also the Department of
Mathematics in Chemnitz for many years.\newline\newline
PS gratefully acknowledge support by the German Ministry for Economic Affairs and Climate Protection for funding the project 16KN092847 as part of the "Central Innovation Programme for SMEs" (ZIM).
\section{Analytic Calculation of Global Sensitivity Indices}\label{c:1}
In the following we calculate the analytic GSIs for the function
\begin{alignat*}{2}
	\fktl{f_2}{\Dbb^{\dbf_2}}{\Cbb}{x_1,x_2,x_3,x_4}{(2x_1-1)^2 x_3+10\sin(2\pi x_1)\left(x_2-\frac{1}{2}\right)^2+\exp(x_3)}\\
	\fktlnl{4x_1^2x_3-4x_1x_3+x_3+10\sin(2\pi x_1)x_2^2-10\sin(2\pi x_1)x_2+\frac{5}{2}\sin(2\pi x_1)+\exp(x_3)}
\end{alignat*}
with $\dbf_2=(\exp,\exp,\cos,\cos)^\top$ using the basis $\Bcal^{\dbf_2}$.
First we calculate the mixed coefficients $c^{\dbf_2}_\kbf(f_2)$, exploiting linearity. For this purpose we define
\begin{align*}
&h_j\colon\Dbb^{\dbf_2}\rightarrow\Cbb,\;j=1,\ldots{},7,\\
&h_1(x_1,x_2,x_3,x_4)=x_1^2x_3,\\
&h_2(x_1,x_2,x_3,x_4)=x_1x_3,\\
&h_3(x_1,x_2,x_3,x_4)=x_3,\\
&h_4(x_1,x_2,x_3,x_4)=\sin(2\pi x_1)x_2^2,\\
&h_5(x_1,x_2,x_3,x_4)=\sin(2\pi x_1)x_2,\\
&h_6(x_1,x_2,x_3,x_4)=\sin(2\pi x_1),\text{ and }\\
&h_7(x_1,x_2,x_3,x_4)=\exp(x_3).
\end{align*}
and observe
\begin{align}
c^{\dbf_2}_\kbf(f_2)= 4c^{\dbf_2}_\kbf(h_1)-4c^{\dbf_2}_\kbf(h_2)+c^{\dbf_2}_\kbf(h_3)+10c^{\dbf_2}_\kbf(h_4)-10c^{\dbf_2}_\kbf(h_5)+\frac{5}{2}c^{\dbf_2}_\kbf(h_6)+c^{\dbf_2}_\kbf(h_7)\label{eq:a6}
\end{align}
for all $\kbf\in\Kbb^{\dbf_2}$.
We know that, if $f\in\Lp{2}(\Dbb^\dbf)$ is a function given as product $f(\xbf)=\prod_{j=1}^df^{d_j}_j(x_j)$ of functions $f^{d_j}_j\in\Lp{2}(\Dbb^{d_j})$, $j=1,\ldots{},d$, then for all $\kbf\in\Kbb^\dbf$ we can decompose the mixed coefficients, i.e.
\begin{align*}
c^\dbf_\kbf(f)=\prod_{j=1}^dc^{d_j}_{k_j}(f^{d_j}_j(x_j)).
\end{align*}
Thus, we decompose the functions $h_i$ into
\begin{align*}
&g^{\exp}_j\colon\Dbb^{\exp}\rightarrow\Cbb,\;j=1,\ldots{},4,\;\;\;&&g^{\cos}_j\colon\Dbb^{\cos}\rightarrow\Cbb,\;j=1,\ldots{},3,\\
&g^{\exp}_1(x)=1,&&g^{\cos}_1(x)=1,\\
&g^{\exp}_2(x)=x,&&g^{\cos}_2(x)=x,\\
&g^{\exp}_3(x)=x^2,&&g^{\cos}_3(x)=\exp(x),\\
&g^{\exp}_4(x)=\sin(2\pi x).&&
\end{align*}
Next we calculate the Fourier coefficients and the cosine coefficients of these functions. We observe $c^{\exp}_k(g^{\exp}_1)=\delta_{k,0}$ and $c^{\cos}_k(g^{\cos}_1)=\delta_{k,0}$ because of the orthogonality of the basis functions and $\phi_0^{\exp}=\phi_0^{\cos}=1$. We start with the zeroth Fourier coefficients and cosine coefficients and observe
\begin{align*}
&c^{\exp}_0(g^{\exp}_2)=c^{\cos}_0(g^{\cos}_2)=\frac{1}{2},&&c^{\exp}_0(g^{\exp}_3)=\frac{1}{3}&&\text{ and }&&c^{\cos}_0(g^{\cos}_3)=\e-1
\end{align*}
through $c_0^{\exp}=c_0^{\cos}=\iglse{0}{1}{f(x)}{x}$. For the case $k\neq0$ we observe 
\begin{align*}
&c^{\exp}_k(g^{\exp}_2)=\frac{1}{2\pi k}\im,&&&&c^{\cos}_k(g^{\cos}_2)=\sqrt{2}\frac{(-1)^k-1}{\pi^2k^2},\\
&c^{\exp}_k(g^{\exp}_3)=\frac{1}{2\pi^2 k^2}+\frac{1}{2\pi k}\im&&\text{ and }&&c^{\cos}_k(g^{\cos}_3)=\sqrt{2}\frac{(-1^k)\e-1}{\pi^2k^2+1}.
\end{align*}
To this end we use the identity $g^{\exp}_4(x)=\sin(2\pi x)=\frac{1}{2\im}\exp(2\pi\im x)-\frac{1}{2\im}\exp(-2\pi\im x)$ for the coefficients $c^{\exp}_k(g^{\exp}_4)$ and get $c^{\exp}_1(g^{\exp}_4)=\frac{1}{2\im}$, $c^{\exp}_{-1}(g^{\exp}_4)=-\frac{1}{2\im}$, and the ohter coefficients are zero. Next, we consider the Fourier cosine coefficients $c_\kbf^{\dbf_2}$ of the functions $h_i$, $i=1,\ldots{},7$ and obtain
\begin{align*}
c^{\dbf_2}_\kbf(h_1)&=\begin{cases}
\frac{1}{6},&k_1,k_2,k_3,k_4=0\\
\frac{1}{4\pi^2 k_1^2}+\frac{1}{4\pi {k_1}}\im,&k_2,k_3,k_4=0,k_1\neq0\\
\sqrt{2}\frac{(-1)^{k_3}-1}{3\pi^2k_3^2},&k_1,k_2,k_4=0,k_3\neq0\\
\sqrt{2}\frac{(-1)^{k_3}-1}{2\pi^4k_1^2k_3^2}+\sqrt{2}\frac{(-1)^{k_3}-1}{2\pi^3k_1k_3^2},&k_2,k_4=0,k_1,k_3\neq0\\
0,&\text{else}
\end{cases},\\
c^{\dbf_2}_\kbf(h_2)&=\begin{cases}
\frac{1}{4},&k_1,k_2,k_3,k_4=0\\
\frac{1}{4\pi k_1}\im,&k_2,k_3,k_4=0,k_1\neq0\\
\sqrt{2}\frac{(-1)^{k_3}-1}{2\pi^2k_3^2},&k_1,k_2,k_4=0,k_3\neq0\\
1\sqrt{2}\frac{(-1)^{k_3}-1}{2\pi^3k_1k_3^2}\im,&k_2,k_4=0,k_1,k_3\neq0\\
0,&\text{else}
\end{cases},\\
c^{\dbf_2}_\kbf(h_3)&=\begin{cases}
\frac{1}{2},&k_1,k_2,k_3,k_4=0\\
\sqrt{2}\frac{(-1)^{k_3}-1}{\pi^2k_3^2},&k_1,k_2,k_4=0,k_3\neq0\\
0,&\text{else}
\end{cases},\\
c^{\dbf_2}_\kbf(h_4)&=\begin{cases}
-\frac{\im}{6},&k_2,k_3,k_4=0,k_1=1\\
\frac{\im}{6},&k_2,k_3,k_4=0,k_1=-1\\
\frac{1}{4\pi k_1}-\frac{1}{2\pi^2 k_1^2}\im,&k_3,k_4=0,k_1=1,k_2\neq0\\
-\frac{1}{4\pi k_1}+\frac{1}{2\pi^2 k_1^2}\im,&k_3,k_4=0,k_1=-1,k_2\neq0\\
0,&\text{else}
\end{cases},\\
c^{\dbf_2}\kbf(h_5)&=\begin{cases}
-\frac{\im}{4},&k_2,k_3,k_4=0,k_1=1\\
\frac{\im}{4},&k_2,k_3,k_4=0,k_1=-1\\
\frac{1}{4\pi k_2},&k_3,k_4=0,k_1=1,k_2\neq0\\
-\frac{1}{4\pi k_2},&k_3,k_4=0,k_1=-1,k_2\neq0\\
0,&\text{else}
\end{cases},\\
c^{\dbf_2}_\kbf(h_6)&=\begin{cases}
-\frac{\im}{2},&k_2,k_3,k_4=0,k_1=1\\
\frac{\im}{2},&k_2,k_3,k_4=0,k_1=-1\\
0,&\text{else}
\end{cases},\text{ and }\\
c^{\dbf_2}_\kbf(h_7)&=\begin{cases}
\e-1,&k_1,k_2,k_3,k_4=0\\
\sqrt{2}\frac{(-1)^{k_3}\e-1}{\pi^2k_3^2+1},&k_1,k_2,k_4=0,k_3\neq0\\
0,&\text{else}
\end{cases}.
\end{align*}
Finally, using the \eqref{eq:a6} we calculate the Fourier cosine coefficients $c^{\dbf_2}_\kbf(f_2)$ for the function $f_2$,
\begin{align*}
c^{\dbf_2}_\kbf(f_2)
&=\begin{cases}
\e-\frac{5}{6},&k_1,k_2,k_3,k_4=0\\
-\frac{1}{\pi^2}-\frac{5}{12}\im,&k_2,k_3,k_4=0,k_1=1\\
-\frac{1}{\pi^2}+\frac{5}{12}\im,&k_2,k_3,k_4=0,k_1=-1\\
\frac{1}{\pi^2k_1^2},&k_2,k_3,k_4=0,\betr{k_1}\geq2\\
\sqrt{2}\frac{(-1)^{k_3}-1}{\pi^2k_3^2}+\sqrt{2}\frac{(-1)^k_3\e-1}{\pi^2k_3^2+1},&k_1,k_2,k_4=0,k_3\neq0\\
-\frac{5}{2\pi^2k_2^2}\im,&k_3,k_4=0,k_1=1,k_2\neq0\\
\frac{5}{2\pi^2k_2^2}\im,&k_3,k_4=0,k_1=-1,k_2\neq0\\
2\sqrt{2}\frac{(-1)^{k_3}-1}{\pi^4k_1^2k_3^2},&k_2,k_4=0,k_1,k_3\neq0\\
0,&\text{else}
\end{cases}.
\end{align*}
Furthermore, we consider the norm of the function $f_2$ and observe $\norm{f_2}_{\Lp{2}(\Dbb^{\dbf_2})}={}\frac{103}{120}+\frac{\e^2}{2}$. Using this we observe the variance $\sigma^2(f_2)=\frac{59}{360}+\frac{5}{3}\e-\frac{\e^2}{2}.$ Next, we consider the ANOVA terms $\sigma^2(f_\ubf)$ for subsets of indices $\ubf\subseteq[4]$ using their series representation $\sigma^2(f_\ubf)=\sum_{\substack{\kbf\in\Kbb^{\dbf_2}\\\supp\kbf=\ubf}}\betrk{c_\kbf^{\dbf_2}(f)}^2$. For the further calculation we need $\sum_{k=1}^\infty\frac{1}{k^4}=\frac{\pi^4}{90}$ and $\sum_{k=1}^\infty\frac{1}{(2k-1)^4}=\frac{\pi^4}{96}$. Using this we get the variances $\sigma^2(f_\ubf)$ of the ANOVA terms $f_\ubf$,
\begin{alignat*}{2}
&\sigma^2(f_{\{1\}})&&=\sum_{\substack{k=-\infty\\k\neq0}}^\infty\betrk{c_{k\ebf_1}^{\dbf_2}}^2=\frac{2}{\pi^4}+\frac{25}{72}+2\sum_{k=2}^\infty\frac{1}{\pi^4k^4}=\frac{133}{360}\\
&\sigma^2(f_{\{1,2\}})&&=\sum_{\substack{k=-\infty\\k\neq0}}^\infty\sum_{\substack{j=-\infty\\j\neq0}}^\infty\betrk{c_{k\ebf_1+j\ebf_2}^{\dbf_2}}^2=4\sum_{k=1}^\infty\frac{25}{4\pi^4k^4}=\frac{5}{18}\\
&\sigma^2(f_{\{1,3\}})&&=\sum_{\substack{k=-\infty\\k\neq0}}^\infty\sum_{j=1}^\infty\betrk{c_{k\ebf_1+j\ebf_3}^{\dbf_2}}^2=2\sum_{k=1}^\infty\sum_{j=1}^\infty\frac{8\left((-1)^j-1\right)^2}{\pi^8k^4j^4}=\frac{1}{135}.
\end{alignat*}
Since the other mixed coefficients except $c^{\dbf_2}_{k\ebf_3}(f_2)$ are zero, we get the variance $\sigma^2(f_{\{3\}})$ through the theorem of Parseval, 
\begin{align*}
\sigma^2(f_{\{3\}})&=\sigma^2(f_2)-\sigma^2(f_{\{1\}})-\sigma^2(f_{\{1,2\}})-\sigma^2(f_{\{1,3\}})=-\frac{53}{108}+\frac{5}{3}\e-\frac{\e^2}{2}.
\end{align*}
Finally, we get the analytic global sensitivity indices
\begin{alignat*}{3}
&\rho(\{1\},f_2)&&=\frac{133}{59+600\e-180\e^2}&&\approx0.369507,\\
&\rho(\{3\},f_2)&&=\frac{-530+1800\e-540\e^2}{177+1800\e-540\e^2}&&\approx0.345259,\\
&\rho(\{1,2\},f_2)&&=\frac{100}{59+600\e-180\e^2}&&\approx0.277825,\text{ and }\\
&\rho(\{1,3\},f_2)&&=\frac{8}{177+1800\e-540\e^2}&&\approx0.007409.
\end{alignat*}

\end{document}